\def\draft{\fbox{\tiny draft: \today}}
\let\draft=\relax
\title{Arrow Contraction and Expansion in Tropical Diagrams}
\author[RM]{R. Matveev}
\author[JWP]{J. W. Portegies}
\begin{document}
\thispagestyle{fancy} 
\begin{abstract}
Arrow contraction applied to a tropical diagram of probability spaces
is a modification of the diagram, replacing one of the morphisms by an
isomorphims, while preserving other parts of the diagram. It is
related to the rate regions introduced by Ahlswede and K\"orner. In a
companion article we use arrow contraction to derive information about
the shape of the entropic cone. Arrow expansion is the inverse
operation to the arrow contraction.
\end{abstract}

\maketitle

\son
\jon

\section{Introduction}
In~\cite{Matveev-Asymptotic-2018} we have initiated the theory of
\term{tropical probability spaces} and in~\cite{Matveev-Tropical-Entropic-2019}
applied the techniques to derive a dimension-reduction result for the
entropic cone of four random variables. 

Two of the main tools used for the latter are what we call \term{arrow
contraction} and \term{arrow expansion}. They are formulated for
tropical commutative diagrams of probability spaces.  Tropical
diagrams are points in the asymptotic cone of the metric space of
commutative diagrams of probability spaces endowed with the asymptotic
entropy distance. Arrows in diagrams of probability spaces are
(equivalence classes of) measure-preserving maps.

Arrow contraction and expansion take a commutative diagram of
probability spaces as input, modify it, but preserve important
properties of the diagram. The precise results are formulated as
Theorems \ref{p:contraction} and \ref{p:expansion-simple} in the main
text. Their formulation requires language, notation and definitions
that we review in Section \ref{se:preliminaries}.

However, to give an idea of the results in this paper, we now present
two examples.

\subsection{Two examples.}
\subsubsection{Arrow contraction and expansion in a two-fan}
Suppose we are given a fan $\Zcal=(X\ot Z\to Y)$ and we would like to
complete it to a diamond 
\[\tageq{xyzv-diamond}
\Zcal_{\diamond}=\left(
\begin{cd}[row sep=-1mm,column sep=small]
  \mbox{}
  \&
  Z
  \arrow{dl}
  \arrow{dr}
  \&
  \mbox{}
  \\
  X
  \arrow{dr}
  \&\&
  Y
  \arrow{dl}
  \\
  \mbox{}
  \&
  V
  \&
  \mbox{}
\end{cd}
\right)
\]
such that the entropy of $V$, denoted by $[V]$, equals the mutual
information $[X:Y]$ between $X$ and $Y$. That is, we would like to
realize the mutual information between $X$ and $U$ by a pair of
reductions $X\to V$ and $U\to V$. This is not always possible, not
even approximately.

Arrow contraction instead produces another fan $\Zcal'=(X\ot Z'\to
V)$, such that the reduction $Z'\to X$ is an isomorphism and the
relative entropy $[X\rel V]$ of $X$ given $V$ equals $[X\rel U]$. By
collapsing this reduction we obtain as a diagram just the reduction
$X\to V$. If need be, we can still keep the original spaces $Z$ and
$U$ in the modified diagram obtaining the ``broken diamond'' diagram
\[
\begin{cd}[row sep=0mm,column sep=tiny]
  \mbox{}
  \&
  Z
  \arrow{dl}
  \arrow{dr}
  \&
  \mbox{}
  \\
  X
  \arrow{dr}
  \&\&
  Y
  \\
  \mbox{}
  \&
  V
  \&
  \mbox{}
\end{cd} 
\]
such that $[V]=[X:Y]$. Of course, no special technique is necessary to
achieve this result, since it is easy to find a reduction from a
tropical space $[X]$ to another tropical probability space with the
prespecified entropy, as long as the Shannon inequality is not
violated.

However, a similar operation becomes non-trivial and in fact
impossible without passing to the tropical limit, if instead of a
single space $X$ there is a more complex sub-diagram as in the example
in the next subsection.

To explain how arrow expansion works, lets start with the chain of
reductions $Z\to X\to V$. Can we extend it to a diamond, as
in~(\ref{eq:xyzv-diamond}) so that $[X:Y\rel V]=0$? This is again not
possible, in general. However, if we pass to tropical diagrams, then
such an extension always exists.

\subsubsection{One More Example of Arrow Expansion and Contraction}
Consider a diagram presented in
Figure~\ref{fig:contraction-in-L3}. Such a diagram is called a
$\Lambdabf_{3}$-diagram. We would like to find a reduction $X\to V$ so
that $[\Xcal\rel U]=[\Xcal\rel V]$. It is not possible to achieve this
within the realm of diagrams of classical probability spaces. But once
we pass to the tropical limit, the reduction $[X]\to{[V]}$ can be
found by contracting and then collapsing the arrow $[Z]\to{} [X]$, as
shown in Figure~\ref{fig:contraction-in-L3}.

\begin{figure}
  \begin{tabular}{ccccc}
    \begin{cd}[column sep=-0.1mm,row sep=normal]
      \mbox{}
      \&{}
      \textcolor{red}{[Z]}
      \arrow[color=red]{dl}
      \arrow{d}
      \arrow[color=red]{dr}
      \&{}
      \\{}
      \textcolor{blue}{[X]}
      \arrow[color=blue]{d}
      \&{}
      [Z_{1}]
      \arrow{dr}
      \arrow{dl}
      \&{}
      [Z_{2}]
      \arrow[color=red]{d}
      \\{}
      \textcolor{blue}{[X_{1}]}
      \&{}
      \textcolor{blue}{[X_{2}]}
      \arrow[from=ul, color=blue,crossing over]
      \arrow[from=ur, crossing over]
      \&{}
      \textcolor{red}{[U]}
    \end{cd}      
    &
    \hspace{-2.05em}
    \begin{cd}[row sep=0mm]
      \mbox{}
      \arrow[rightsquigarrow]{r}{%
	\parbox{14mm}{\tiny\center arrow\\[-0.6ex] contraction\\[0.5ex]}}
      \&
      \mbox{}
      \\
      \mbox{}
      \&
      \mbox{}
      \arrow[rightsquigarrow]{l}{%
	\parbox{14mm}{\tiny\center arrow\\[-0.6ex] expansion\\[0.5ex]}}
    \end{cd}
    \hspace{-2.01em}
    &
    \begin{cd}[column sep=0mm]
      \mbox{}
      \&{}
      \textcolor{red}{[Z']}
      \arrow[color=red]{dl}[above]{\rotatebox{50}{$\cong$}}
      \arrow{d}
      \arrow[color=red]{dr}
      \&{}
      \\{}
      \textcolor{blue}{[X]}
      \arrow[color=blue]{d}
      \&{}
      [Z'_{1}]
      \arrow{dr}
      \arrow{dl}
      \&{}
      [Z'_{2}]
      \arrow[color=red]{d}
      \\{}
      \textcolor{blue}{[X_{1}]}
      \&{}
      \textcolor{blue}{[X_{2}]}
      \arrow[from=ul, color=blue,crossing over]
      \arrow[from=ur, crossing over]
      \&{}
      \textcolor{red}{[V]}
    \end{cd}      
    &
    \hspace{-2em}
    \begin{cd}
      \mbox{}
      \arrow[rightsquigarrow]{r}{%
	\parbox{14mm}{\tiny\center arrow\\[-0.6ex] collapse\\[0.5ex]}}
      \&
      \mbox{}
    \end{cd}
    \hspace{-1em}
    &
    \hspace{-10mm}
    \begin{cd}[column sep=-4mm]
      \mbox{}
      \&{}
      \&{}
      \textcolor{blue}{[X]}
      \arrow[color=blue]{dl}
      \arrow[color=red]{dd}
      \arrow[color=blue]{dr}
      \&
      =
      \&{}
      \textcolor{red}{[Z']}
      \\{}
      \&{}
      [Z'_{1}]
      \arrow[color=blue]{dl}
      \arrow{dr}
      \&{}
      \&{}
      [Z'_{2}]
      \arrow[color=blue]{dr}
      \arrow{dl}
      \&{}
      \\{}
      \textcolor{blue}{[X_{1}]}
      \&{}
      \&{}
      \textcolor{red}{[V]}
      \&{}
      \&{}
      \textcolor{blue}{[X_{2}]}
    \end{cd}         
  \end{tabular}
  \caption{Arrow contraction and expansion in a
    $\Lambdabf_{3}$-diagram. \emph{The fan $([X]\ot{}[Z]\to{}[U])$ is
      admissible. Spaces $[Z_{1}]$, $[Z_{2}]$ and $[Z]$ belong to the
      co-ideal $\lf U\rf$.}}
  \label{fig:contraction-in-L3}
\end{figure}

\medskip
Arrow contraction is closely related to the Shannon channel coding
theorem. This is perhaps most obvious from the proof. Furthermore,
arrow contraction has connections with rate regions as introduced by
Ahlswede and K\"orner, see \cite{Ahlswede-Connection-1977,
  Ahlswede-Common-2006}. These results by Ahlswede and K\"orner were
applied by \cite{Makarychev-New-2002}, resulting in a new non-Shannon
information inequality. Moreover, in \cite{Makarychev-New-2002} a new
proof was given of the results; this new proof is similar to the proof
of the arrow contraction result in the present paper.
	
The main contribution of our work lies in the fact that we prove a
much stronger preservation of properties of the diagram under arrow
contraction.

\section{Preliminaries}
\label{se:preliminaries}
\subsection{Probability spaces and their diagrams}
Our main objects of study will be \term{commutative diagrams of
  probability spaces}.  A \term{finite probability space} $X$ is a set
with a probability measure on it, supported on a finite set. We denote
by $|X|$ the cardinality of the support of the measure. The statement
$x\in X$ means that point $x$ is an atom with positive weight in $X$.
For details see
\cite{Matveev-Asymptotic-2018,Matveev-Tropical-2019,Matveev-Conditioning-2019}.

Examples of commutative diagrams of probability spaces are shown in
Figure~\ref{fig:diagram-examples}.  The objects in such diagrams are
finite probability spaces and morphisms are equivalence classes of
measure-preserving maps. Two such maps are considered to be
equivalent, if they coincide on a set of full measure. To record the
combinatorial structure of a commutative diagram, i.e. the arrangement
of spaces and morphisms, we use \term{indexing categories}, which are
poset categories satisfying an additional property, that we describe
below.

\begin{figure}
\begin{tabular}{ccc}
\begin{cd}[column sep=small]
  \mbox{}
  \&
  Z
  \arrow{dl}
  \arrow{dr}
  \&
  {}
  \\
  X
  \&\mbox{}\&
  Y
\end{cd}
&
\begin{cd}[row sep=3mm,column sep=small]
  \mbox{}
  \&
  Z
  \arrow{dl}
  \arrow{dr}
  \&
  \mbox{}
  \\
  X
  \arrow{dr}
  \&\mbox{}\&
  Y
  \arrow{dl}
  \\
  \mbox{}
  \&
  U
\end{cd}
&
\begin{cd}[row sep=3mm,column sep=small]
  \mbox{}
  \&
  T
  \arrow{dl}
  \arrow{d}
  \arrow{dr}
  \&
  \mbox{}
  \\
  U
  \arrow{d}
  \&
  V
  \arrow{dl}
  \arrow{dr}
  \&
  W
  \arrow{d}
  \\
  X
  \&
  Y
  \arrow[from=ul,crossing over]{}
  \arrow[from=ur,crossing over]{}
  \&
  Z
  \\
\end{cd}
\\
1. A fan
&
2. A diamond diagram
&
3. Full diagram on 3 spaces
\end{tabular}
\caption{Examples of diagrams of probability spaces}
\label{fig:diagram-examples}
\end{figure}

\subsubsection{Indexing Categories} 
A \term{poset category} is a finite category such that there are at
most one morphism between any two objects either way.

For a pair of objects $k,l$ in a poset category
$\Gbf=\set{i;\,\gamma_{ij}}$, such that there is a morphism
$\gamma_{kl}$ in $\Gbf$, we call $k$ an ancestor of $l$ and $l$
a descendant of $k$. The set of all ancestors of an object $k$
together with all the morphisms between them is itself a poset
category and will be called a \term{co-ideal} generated by $k$ and
denoted by $\lf k\rf$. Similarly, a poset category consisting of all
descendants of $k\in\Gbf$ and morphisms between them will be called an
\term{ideal} generated by $k$ and denoted $\lc k\rc$.

An \term{indexing category} $\Gbf=\set{i;\,\gamma_{ij}}$ used for
indexing diagrams, is a poset category satisfying the following
additional property: for any pair of objects $i_{1},i_{2}\in\Gbf$ the
intersection of co-ideals is also a co-ideal generated by some object
$i_{3}\in\Gbf$,
\[
  \lf i_1 \rf \cap \lf i_{2} \rf = \lf i_3 \rf
\]
In other words, for any pair of objects $i_{1},i_{2}\in\Gbf$ there
exists a \term{least common ancestor} $i_{3}$, that is $i_{3}$ is an
ancestor to both $i_{1}$ and $i_{2}$ and any other common ancestor is
also an ancestor of $i_3$.  Any indexing category is \term{initial},
i.e. there is a (necessarily unique) \term{initial} object
$\hat\imath$ in it, which is the ancestor of any other object in
$\Gbf$, in other words $\Gbf=\lc \hat\imath\rc$.

A \term{fan} in a category is a pair of morphisms with the same
domain. A fan $(i\ot k\to j)$ is called \term{minimal}, if for any
other fan $(i\ot l\to j)$ included in a commutative diagram
\[
\begin{cd}[row sep=-1mm]
  \mbox{}\&
  k
  \arrow{dl}
  \arrow{dr}
  \arrow{dd}
  \\
  i
  \&\&
  j
  \\
  \&
  l
  \arrow{ul}
  \arrow{ur}
\end{cd}
\]
the vertical morphism $(k\to l)$ must be an isomorphism.
Any indexing category also satisfies the property that for any pair of
objects in it there exists a minimal fan with target objects the given ones.

This terminology will also be applied to diagrams of probability
spaces indexed by $\Gbf$. Thus, given a space $X$ in a $\Gbf$-diagram,
we can talk about its ancestors, descendants, co-ideal $\lf X\rf$ and
ideal $\lc X\rc$. We use square brackets to denote tropical
  diagrams and spaces in them. For the (co-)ideals in tropical diagrams, in order
to unclutter notations, we will write
\[
  \lf X\rf:=\lf [X]\rf
  \quad\text{and}\quad
  \lc X\rc:=\lc [X]\rc
\]

\subsubsection{Diagrams}
For an indexing category $\Gbf=\set{i;\,\gamma_{ij}}$ and a category
$\mathbf{Cat}$, a commutative $\Gbf$-diagram
$\Xcal=\set{X_{i};\,\chi_{ij}}$ is a functor
$\Xcal:\Gbf\to\mathbf{Cat}$. A diagram $\Xcal$ is called \term{minimal} if it
maps minimal fans in $\Gbf$ to minimal fans in $\mathbf{Cat}$.

A \term{constant $\Gbf$-diagram} denoted $X^{\Gbf}$ is a diagram where all
the objects equal to $X$ and all morphisms are identities.

Important examples of indexing categories are a two-fan, a diamond
category, a full category $\Lambdabf_{n}$ on $n$ spaces, chains
$\Cbf_{n}$. For detailed description and more examples, the reader is
referred to the articles cited at the beginning of this section.

\subsection{Tropical Diagrams}

\subsubsection{Intrinsic Entropy Distance}
For a fixed indexing category $\Gbf$ the space of commutative
$\Gbf$-diagrams will be denoted by $\prob\<\Gbf\>$.
Evaluating entropy on every space in a $\Gbf$-diagram gives a map
\[
\ent_{*}:\prob\<\Gbf\>\to \Rbb^{\Gbf}
\]
where the target space $\Rbb^{\Gbf}$ is the space of real-valued
functions on objects of $\Gbf$. We endow this space with the
$\ell^{1}$-norm.
For a fan $\Fcal=(\Xcal\ot \Zcal\to\Ycal)$ of $\Gbf$-diagrams we
define the entropy distance between it terminal objects by
\[     
  \kd(\Fcal)
  :=
  \left\| \ent_{*}\Zcal - \ent_{*}\Xcal\right\|_{1}
  +
  \left\| \ent_{*}\Zcal - \ent_{*}\Ycal\right\|_{1}
\]     
and the intrinsic entropy distance between two arbitrary $\Gbf$-diagrams
by
\[
  \ikd(\Xcal,\Ycal)
  :=
  \inf\set{\kd(\Fcal)\st \Fcal=(\Xcal\ot \Zcal\to\Ycal)}
\]
The triangle inequality for $\ikd$ and various its other properties are
discussed in~\cite{Matveev-Asymptotic-2018}.

In the same article, also a useful estimate for the intrinsic entropy distance is proven, called the Slicing Lemma. The
following corollary, \cite[Corollary 3.10(1)]{Matveev-Asymptotic-2018},
of the Slicing Lemma will be used in the next section.  

\begin{proposition}{p:slicing}
  Let $\Gbf$ be an indexing category, $\Xcal,\Ycal\in\prob\<\Gbf\>$ and
  $U\in\prob$ included into a pair of two-fans
  \[
  \begin{cd}[row sep=-2mm]
    \mbox{}
    \&
    \tilde\Xcal
    \arrow{dl}
    \arrow{dr}
    \mbox{}
    \\
    \Xcal
    \&\mbox{}\&
    U^{\Gbf}
  \end{cd}
  \quad\quad\quad
  \begin{cd}[row sep=-2mm]
    \mbox{}
    \&
    \tilde\Ycal
    \arrow{dl}
    \arrow{dr}
    \&
    \mbox{}
    \\
    U^{\Gbf}
    \&\mbox{}\&
    \Ycal
  \end{cd}
  \]
  Then
  \[
    \ikd(\Xcal,\Ycal) 
    \leq 
    \int_{U}\ikd(\Xcal\rel u,\Ycal\rel u)\d p_{U}(u)
    +2\cdot\size{\Gbf}\cdot\ent(U)
  \]
  \end{proposition}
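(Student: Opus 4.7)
The plan is to construct a fan $(\Xcal\ot\Zcal\to\Ycal)$ by a \emph{slicing-and-gluing} procedure: condition on the value of $U$, pick a near-optimal fan $(\Xcal\rel u \ot \Zcal_u \to \Ycal\rel u)$ for each $u$ in the support of $U$, and reassemble the $\Zcal_u$'s into a single $\Gbf$-diagram $\Zcal$ that carries $U$ as a reduction at every object. The hypothesis that $\tilde\Xcal$ and $\tilde\Ycal$ both map to the constant diagram $U^{\Gbf}$ is precisely what makes conditioning on $U$ at every object of $\Gbf$ well-defined and consistent across the $\Gbf$-structure.

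Concretely, fix $\epsilon>0$ and for each $u$ choose a fan $\Fcal_u = (\Xcal\rel u \ot \Zcal_u \to \Ycal\rel u)$ with $\kd(\Fcal_u) \leq \ikd(\Xcal\rel u,\Ycal\rel u) + \epsilon$. Build $\Zcal$ by setting $Z_i := \bigsqcup_{u} Z_{i,u}\times\{u\}$ with weights $p_U(u)\cdot p_{Z_{i,u}}(z)$, and with morphisms acting as those of $\Zcal_u$ on each slice while being the identity on the $U$-coordinate. This produces reductions $\Zcal\to\hat\Xcal\to\Xcal$, where $\hat\Xcal$ is the auxiliary diagram whose spaces are the joint distributions $p(x,u) = p_{X_i\rel u}(x)\,p_U(u)$ induced by $\tilde\Xcal$; symmetrically, $\Zcal\to\hat\Ycal\to\Ycal$. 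The resulting fan $(\Xcal\ot\Zcal\to\Ycal)$ bounds $\ikd(\Xcal,\Ycal)$ above by $\kd$.

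To estimate $\kd$, split via the triangle inequality in the $\ell^{1}$-norm,
\[
\|\ent_{*}\Zcal - \ent_{*}\Xcal\|_{1}
\leq
\|\ent_{*}\Zcal - \ent_{*}\hat\Xcal\|_{1}
+
\|\ent_{*}\hat\Xcal - \ent_{*}\Xcal\|_{1}.
\]
For the first summand, both $\Zcal$ and $\hat\Xcal$ admit $U$ as a common reduction at every object, so the chain rule gives $\ent(Z_i) - \ent(\hat X_i) = \int_{U}[\ent(Z_{i,u}) - \ent(X_i\rel u)]\,\d p_U(u)$; summing absolute values over $i$ yields $\int_{U}\|\ent_{*}\Zcal_u - \ent_{*}(\Xcal\rel u)\|_{1}\,\d p_U(u)$. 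For the second summand, $\hat X_i$ is a coupling of $X_i$ and $U$, whence $|\ent(\hat X_i)-\ent(X_i)| \leq \ent(U)$ at each of the $\size{\Gbf}$ objects, contributing at most $\size{\Gbf}\cdot\ent(U)$. The symmetric analysis on the $\Ycal$-side produces another $\size{\Gbf}\cdot\ent(U)$. Adding the two and recognizing $\int_U \kd(\Fcal_u)\,\d p_U(u) \leq \int_U \ikd(\Xcal\rel u, \Ycal\rel u)\,\d p_U(u) + \epsilon$, then letting $\epsilon\to 0$, yields the stated inequality.

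The main obstacle is the bookkeeping in the gluing step: one must verify that the slice-wise morphisms of the $\Zcal_u$ cohere into a genuine commutative $\Gbf$-diagram $\Zcal$, and that the assembled maps to $\hat\Xcal$ and $\hat\Ycal$ are honest measure-preserving reductions. Because $U$ is a finite probability space, this is a finite combinatorial check on each slice; no measurability subtleties intervene, and the required entropy identities reduce to the chain rule applied one object of $\Gbf$ at a time.
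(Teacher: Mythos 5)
Your argument is correct. Note that the paper itself does not prove this proposition; it imports it verbatim as Corollary 3.10(1) of the Slicing Lemma from the companion article \cite{Matveev-Asymptotic-2018}. Your slice--couple--glue construction, with the chain rule giving $\ent(Z_i)-\ent(\hat X_i)=\int_U[\ent(Z_{i,u})-\ent(X_i\rel u)]\,\d p_U(u)$ and the bound $0\le\ent(\hat X_i)-\ent(X_i)=\ent(U\rel X_i)\le\ent(U)$ at each of the $\size{\Gbf}$ objects, is essentially the standard proof of that cited result and correctly accounts for the $2\cdot\size{\Gbf}\cdot\ent(U)$ error term.
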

 
\subsubsection{Tropical Diagrams} 
Points in the asymptotic cone of $(\prob\<\Gbf\>,\ikd)$ are called
tropical $\Gbf$-diagrams and the space of all tropical
$\Gbf$-diagrams, denoted $\prob[\Gbf]$, is endowed with the
\term{asymptotic entropy distance}. We explain this now in more detail and a more extensive description can be found in \cite{Matveev-Tropical-2019}. 

To describe points in
$\prob[\Gbf]$ we consider certain sequences
$\bar\Xcal:=(\Xcal(n):\,n\in \Nbb)$ of $\Gbf$-diagrams, that grow
almost linearly and endow the space of all such sequences with the
\term{asymptotic entropy distance} defined by
\[
  \aikd(\bar\Xcal,\bar\Ycal)
  :=
  \lim_{n\to\infty}\frac1n \ikd\big(\Xcal(n),\Ycal(n)\big)
\]
A tropical diagram $[\Xcal]$ is defined to be an equivalence class of
such sequences, where two sequences $\bar\Xcal$ and $\bar\Ycal$ are
equivalent, if $\aikd(\bar\Xcal,\bar\Ycal)=0$. The space $\prob[\Gbf]$
carries the asymptotic entropy distance and
has the structure of a $\Rbb_{\geq0}$-semi-module -- one can take linear combinations with non-negative coefficients of tropical diagrams.
The linear entropy functional $\ent_{*}:\prob[\Gbf]\to\Rbb^{\Gbf}$ is
defined by 
\[
\ent_{*}[\Xcal]:=\lim_{n \to \infty} \frac1n \ent_{*}\Xcal(n)
\]

A detailed discussion about
tropical diagrams can be found in~\cite{Matveev-Tropical-2019}.
In the cited article we show that, the space $\prob[\Gbf]$ is
metrically complete, and isometrically isomorphic to a closed convex
cone in some Banach space.

For $\Gbf=\Cbf_{k}$ a \term{chain category}, containing $k$ objects
$\set{1,\dots,k}$ and unique morphism $i\to j$ for every
pair $i\geq j$, we have shown that the space $\prob[\Cbf_{k}]$ is isomorphic to
the following cone in $(\Rbb^{k},\left\| \,\cdot\, \right\|_{1})$
\[
  \prob[\Cbf_{k}]
  \cong
  \set{
    \left(\begin{array}{c}\!\!x_{1}\!\!\\\vdots\\\!\!x_{k}\!\!\end{array}\right)
    \st
    0\leq x_{1}\leq\dots\leq x_{k}
  }
\]
The isomorphism is given by the entropy functional.
Thus we can identify tropical probability spaces (elements in $\prob[\Cbf_{1}]$)
with non-negative numbers via entropy. We will simply write $[X]$ to
mean the entropy of the space $[X]$. Along this line we also adopt the
notations $[X\rel Y]$, $[X:Y]$ and $[X:Y\rel Z]$ for the relative
entropy and mutual information for the tropical spaces included in some
diagram.

\subsection{Asymptotic Equipartition Property for Diagrams}
\subsubsection{Homogeneous diagrams}

A $\Gbf$-diagram $\Xcal$ is called \term{homogeneous} if the
automorphism group $\Aut(\Xcal)$ acts transitively on every space in
$\Xcal$. Homogeneous probability spaces are uniform. For more complex
indexing categories this simple description is not sufficient.

\subsubsection{Tropical Homogeneous Diagrams}
The subcategory of all homogeneous $\Gbf$-diagrams will be denoted
$\Prob\<\Gbf\>_{\hsf}$ and we write $\Prob\<\Gbf\>_{\hsf,\msf}$ for
the category of minimal homogeneous $\Gbf$-diagrams. These spaces are
invariant under the tensor product, thus they are metric Abelian
monoids.

Passing to the tropical limit we obtain spaces of tropical (minimal)
homogeneous diagrams, that we denote $\Prob[\Gbf]_{\hsf}$ and
$\Prob[\Gbf]_{\hsf,\msf}$.

\subsubsection{Asymptotic Equipartition Property}
In~\cite{Matveev-Asymptotic-2018} the following theorem is proven
\begin{theorem}{p:aep-complete}
  Suppose $\Xcal\in\prob\<\Gbf\>$ is a $\Gbf$-diagram of
  probability spaces for some fixed indexing category $\Gbf$.
  Then there exists a sequence
  $\bar\Hcal=(\Hcal_{n})_{n=0}^{\infty}$ of homogeneous
  $\Gbf$-diagrams such
  that
  \[\tageq{quantaep} 
    \frac{1}{n} 
    \ikd (\Xcal^{n},\Hcal_{n}) 
    \leq 
    C(|X_0|,\size{\Gbf}) \cdot \sqrt{\frac{\ln^3 n}{n}} 
  \]
  where $C(|X_0|, \size{\Gbf})$ is a constant only depending on $|X_0|$
  and $\size{\Gbf}$.
\end{theorem}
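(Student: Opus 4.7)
The plan is to reduce to the classical single-space AEP at the initial object $X_{\hat\imath}$ and propagate it through the entire diagram, controlling the error with the Slicing Lemma (Proposition~\ref{p:slicing}). The key observation is that since $\hat\imath$ is an ancestor of every object in $\Gbf$, every space $X_i$ is a reduction of $X_{\hat\imath}$, so any $S_n$-equivariant structure on $X_{\hat\imath}^n$ pushes forward to give an $S_n$-equivariant structure on the whole diagram $\Xcal^n$, which is homogeneity.

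Concretely, I would decompose $X_{\hat\imath}^n$ by types: for each empirical distribution $\tau$ on $X_{\hat\imath}$ with denominator $n$, let $T_\tau \subset X_{\hat\imath}^n$ be the corresponding type class, equipped with the uniform probability measure. Let $U$ be the random variable on $X_{\hat\imath}^n$ that records the type; the total number of types is at most $(n+1)^{|X_{\hat\imath}|}$, so $\ent(U) \leq |X_{\hat\imath}|\cdot \log(n+1)$. Since the $S_n$-action on $X_{\hat\imath}^n$ by permuting coordinates acts through all morphisms of $\Xcal^n$ (they are defined coordinate-wise), each conditional diagram $\Xcal^n\rel\{U=\tau\}$ is a homogeneous $\Gbf$-diagram $\Hcal_{n,\tau}$. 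I now choose $\Hcal_n$ to be $\Hcal_{n,\tau^*}$ for the type $\tau^*$ closest to the true law $p_{X_{\hat\imath}}$ (say rounding each $np(x)$ to the nearest integer and adjusting so coordinates sum to $n$), which is automatically homogeneous.

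To invoke Proposition~\ref{p:slicing} with $U$ as above, I need to compare $\Xcal^n\rel u$ with $\Hcal_n \rel u = \Hcal_n$ (the constant diagram) for each $u = \tau$. For $\tau$ in the typical set, $\Hcal_{n,\tau}$ is obtained from $\Hcal_{n,\tau^*}$ by a small permutation of roles of letters in $X_{\hat\imath}$ and discarding/adding at most $O(\sqrt{n\ln n})$ coordinates by standard Chernoff estimates, giving $\ikd(\Xcal^n\rel\tau,\Hcal_n) \leq C\sqrt{n\ln n}\cdot\size{\Gbf}\cdot\log|X_{\hat\imath}|$; for $\tau$ outside the typical set, one uses the crude a priori bound $\ikd(\cdot,\cdot) \leq 2\size{\Gbf}\cdot n\log|X_{\hat\imath}|$ together with the fact that the atypical mass is exponentially small. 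Combining these with the additional error term $2\size{\Gbf}\cdot\ent(U) = O(\size{\Gbf}\cdot|X_{\hat\imath}|\log n)$ from Proposition~\ref{p:slicing} and dividing by $n$ yields the bound of the form $C(|X_{\hat\imath}|,\size{\Gbf})\sqrt{\ln^3 n / n}$, where the extra $\ln n$ factor inside the square root absorbs the polynomial enumeration of types.

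The main obstacle I anticipate is the comparison step $\ikd(\Xcal^n\rel\tau,\Hcal_n)$ for typical $\tau$. It is intuitively clear that two type-class diagrams with nearby types are close, but to get this comparison with the correct quantitative rate, one must construct an explicit common fan $(\Hcal_{n,\tau^*}\ot \Zcal\to \Hcal_{n,\tau})$, which amounts to coupling the two uniform measures on type classes in a way that is equivariant with respect to all morphisms in $\Gbf$. Producing such a coupling, and checking that it induces a fan with total entropy discrepancy $O(\sqrt{n\ln n})$ at every vertex of $\Gbf$ simultaneously, is the technical core of the argument; homogeneity of the constructed diagrams $\Hcal_{n,\tau}$ simplifies this considerably since the symmetric group action gives canonical choices at every intermediate space.
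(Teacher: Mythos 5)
Note first that this paper does not prove Theorem~\ref{p:aep-complete} at all: it is quoted verbatim from the companion paper \cite{Matveev-Asymptotic-2018}, so there is no internal proof to measure your argument against. On its own terms, your strategy is the natural one and, as far as I can tell, the intended one: condition $\Xcal^{n}$ on the $S_{n}$-orbit (the type) of the initial space $X_{0}^{n}$, observe that each conditioned diagram is homogeneous, pick the central type $\tau^{*}$, and assemble the estimate with Proposition~\ref{p:slicing}. The homogeneity claim does hold, but for a slightly stronger reason than you state: one must check not only that $S_{n}$ acts transitively on the type class $T_{\tau}\subset X_{0}^{n}$, but also that the pushforward of the uniform measure on $T_{\tau}$ along each $\chi_{0i}^{n}$ is again uniform on a single type class of $X_{i}^{n}$; this follows because all fibers of $\chi_{0i}^{n}|_{T_{\tau}}$ over $T_{(\chi_{0i})_{*}\tau}$ have equal cardinality by $S_{n}$-equivariance.

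The genuine gap is exactly the step you flag and then assert: the bound $\ikd(\Xcal^{n}\rel\tau,\Hcal_{n,\tau^{*}})\leq C\sqrt{n\ln n}$ for typical $\tau$. This cannot be extracted from the local estimate of Proposition~\ref{p:ikd-local}, since the two uniform measures are supported on disjoint type classes (total variation equal to $2$); a genuine transport argument is required. Concretely, with $\delta:=\tfrac12\|\tau-\tau^{*}\|_{1}$ one builds a fan $T_{\tau}\ot Z\to T_{\tau^{*}}$ where $Z$ is the set of pairs of sequences differing in at most $n\delta$ coordinates, carrying an $S_{n}$-invariant coupling of the two uniform measures; each fiber of $Z$ over either foot has cardinality at most $\binom{n}{n\delta}|X_{0}|^{n\delta}$, so the entropy defect at the initial vertex is at most $n\delta\big(\ln(1/\delta)+\ln|X_{0}|+1\big)$, and pushing the coupling forward through every $\chi_{0i}^{n}$ produces the required fan at all vertices simultaneously with no larger defect, for a total of $2\size{\Gbf}\,n\delta\big(\ln(1/\delta)+\ln|X_{0}|+1\big)$. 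With the typical deviation $\delta=O(|X_{0}|\sqrt{\ln n/n})$ this is $O\big(\size{\Gbf}\,|X_{0}|\ln|X_{0}|\sqrt{n\ln^{3}n}\,\big)$, which after division by $n$ is precisely the rate in the theorem. This also corrects a misattribution in your sketch: the $\ln^{3}n$ does not come from the enumeration of types --- $\ent(U)=O(|X_{0}|\ln n)$ contributes only $O(\ln n/n)$ after dividing by $n$, which is lower order --- but from the $\ln(1/\delta)$ factor in the coupling entropy just described. The atypical contribution is handled as you say, by the crude bound $2\size{\Gbf}\,n\ln|X_{0}|$ against exponentially (or, with threshold $\sqrt{n\ln n}$, polynomially) small mass.
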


The approximating sequence of homogeneous diagrams is evidently
quasi-linear with the defect bounded by the admissible function
\[
  \phi(t)
  :=
   2C(|X_0|,\size{\Gbf})\cdot t^{3/4}\geq 2C(|X_0|,\size{\Gbf})\cdot t^{1/2}\cdot \ln^{3/2}t
\]
Thus, Theorem~\ref{p:aep-complete} above states that
$\lin(\prob\<\Gbf\>)\subset\prob[\Gbf]_{\hsf}$. On the other hand we
have shown in~\cite{Matveev-Tropical-2019}, that the space of linear
sequences $\lin(\prob\<\Gbf\>)$ is dense in $\prob[\Gbf]$. Combining
the two statements we get the following theorem.

\begin{theorem}{p:aep-tropical} For any indexing category $\Gbf$, the space 
  $\prob[\Gbf]_{\hsf}$ is dense in $\prob[\Gbf]$. Similarly, the space
  $\prob[\Gbf]_{\hsf,\msf}$ is dense in $\prob[\Gbf]_{\msf}$. 
\end{theorem}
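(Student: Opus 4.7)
The plan is to combine Theorem~\ref{p:aep-complete} with the density result for linear sequences cited from \cite{Matveev-Tropical-2019}. The theorem is essentially a bookkeeping observation: the AEP produces homogeneous approximations for every classical diagram, so the tropical class of every linear sequence is represented by a sequence of homogeneous diagrams, and linear sequences are already known to form a dense subset.

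First I would fix $\Xcal\in\prob\<\Gbf\>$ and consider its linear sequence $\bar\Xcal=(\Xcal^{n})_{n\in\Nbb}$. By Theorem~\ref{p:aep-complete}, there is a sequence $\bar\Hcal=(\Hcal_{n})_{n\in\Nbb}$ of homogeneous $\Gbf$-diagrams with
\[
  \frac{1}{n}\ikd(\Xcal^{n},\Hcal_{n})\leq C(|X_{0}|,\size{\Gbf})\sqrt{\frac{\ln^{3}n}{n}}.
\]
The right hand side is controlled by the admissible function $\phi$ displayed after the theorem, so $\bar\Hcal$ is quasi-linear and defines a point $[\Hcal]\in\prob[\Gbf]_{\hsf}$. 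Moreover, the estimate forces $\aikd(\bar\Xcal,\bar\Hcal)=0$, so $[\Xcal]=[\Hcal]$ as tropical diagrams. Hence the map $\Xcal\mapsto[\Xcal]$ factors through $\prob[\Gbf]_{\hsf}$, i.e.\ $\lin(\prob\<\Gbf\>)\subseteq\prob[\Gbf]_{\hsf}$.

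Next I would invoke the density statement from \cite{Matveev-Tropical-2019}, which says that $\lin(\prob\<\Gbf\>)$ is dense in $(\prob[\Gbf],\aikd)$. Combined with the previous step, this immediately gives that $\prob[\Gbf]_{\hsf}$ is dense in $\prob[\Gbf]$, which is the first assertion.

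For the minimal version, the same argument works once we ensure that minimality is preserved under both constructions. The tensor power $\Xcal^{n}$ of a minimal $\Gbf$-diagram is minimal, since the product of minimal fans in $\prob\<\Gbf\>$ is minimal; one then needs that the homogeneous approximants $\Hcal_{n}$ produced by Theorem~\ref{p:aep-complete} can be taken minimal when $\Xcal$ is minimal, which is how the AEP is set up in \cite{Matveev-Asymptotic-2018}. With that, the image of $\prob\<\Gbf\>_{\msf}$ lies in $\prob[\Gbf]_{\hsf,\msf}$, and the cited density of minimal linear sequences in $\prob[\Gbf]_{\msf}$ concludes the argument. The main (and only genuine) obstacle is checking that the approximating homogeneous diagrams in the AEP inherit minimality from $\Xcal$; everything else is formal manipulation of the two input results.
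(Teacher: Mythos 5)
Your argument is exactly the one the paper uses: Theorem~\ref{p:aep-complete} gives $\lin(\prob\<\Gbf\>)\subset\prob[\Gbf]_{\hsf}$, and combining this with the density of linear sequences from \cite{Matveev-Tropical-2019} yields the claim, with the minimal case handled by the same reduction. Your extra remark that one must check the homogeneous approximants inherit minimality is a fair point the paper leaves implicit, but it does not change the route.
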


It is possible that the spaces $\prob[\Gbf]_{\hsf}$ and $\prob[\Gbf]$
coincide. At this time we have neither a proof nor a counterexample to
this conjecture.

\subsection{Conditioning in Tropical Diagrams}
For a tropical $\Gbf$-diagram $[\Xcal]$ containing a space $[U]$ we
defined a conditioned diagram $[\Xcal\rel U]$. It can be understood
as the tropical limit of the sequence $(\Xcal(n)\rel u_{n})$, where
$(\Xcal(n))$ is the homogeneous approximation of $[\Xcal]$, $U(n)$ is
the space in $\Xcal(n)$ that corresponds to $[U]$ under combinatorial
isomorphism and $u_{n}$ is any atom in $U(n)$.

We have shown in \cite{Matveev-Conditioning-2019} that operation of
conditioning is Lipschitz-continuous with respect to the asymptotic
entropy distance.

\section{Arrow Contraction}
\subsection{Arrow Collapse, Arrow Contraction and Arrow Expansion}
\subsubsection{Prime Morphisms}
A morphism $\gamma_{ij}:i\to j$ in an indexing category
$\Gbf=\set{i;\gamma_{ij}}$ will be called \term{prime} if it cannot be
factored into a composition of two non-identity morphisms
in$\Gbf$. Morphism in a $\Gbf$-diagram indexed by a prime morphism in
$\Gbf$ will also be called \term{prime}.

\subsubsection{Arrow Collapse}
Suppose $\Zcal$ is a $\Gbf$-diagram such that for some pair
$i,j\in\Gbf$, the prime morphism $\zeta_{ij}:Z_{i}\to Z_{j}$ is an
isomorphism.  \term{Arrow collapse} applied to $\Zcal$ results
  in a new diagram $\Zcal'$ obtained from $\Zcal$ by
identifying $Z_{i}$ and $Z_{j}$ via the isomorphism $\zeta_{ij}$. The
combinatorial type of $\Zcal'$ is different from that of $\Zcal$. The
spaces $Z_{i}$ and $Z_{j}$ are replaced by a single space and the new
space will inherit all the morphisms in $\Zcal$ with targets and
domains $Z_{i}$ and $Z_{j}$.

\subsubsection{Arrow Contraction and Expansion}
\term{Arrow contraction and expansion} are two operations on tropical
$\Gbf$-diagrams. Roughly speaking, arrow contraction applied to a tropical
$\Gbf$-diagram $[\Zcal]$ results in another tropical
$\Gbf$-diagram $[\Zcal']$ such that one of the arrows become an
isomorphism, while some parts of the diagram are not modified. Arrow
expansion is an inverse operation to arrow contraction. 

\subsubsection{Admissible and Reduced Sub-fans}
An \term{admissible fan} in a $\Gbf$-diagram $\Zcal$ is a minimal fan
$X\ot Z\to U$, such that $Z$ is the initial space of $\Zcal$ and any
space in $\Zcal$ belongs either to the co-ideal $\lc X\rc$ or ideal
$\lf U\rf$.

An admissible fan will be called \term{reduced} if the morphism $Z\to X$ is
an isomorphism.

\subsection{The Contraction Theorem}
Our aim is to prove the following theorem.
\begin{theorem}{p:contraction}
	Let $([X]\ot{}[Z]\to{}[U])$ be an admissible fan in some tropical
	$\Gbf$-diagram $[\Zcal]$. Then for every
	$\epsilon>0$ there exists a $\Gbf$-diagram $[\Zcal']$ containing
	an admissible fan
	$([X']\ot{}[Z']\to{}[U'])$, corresponding to the original admissible fan through the combinatorial isomorphism, such that, with the notations $\Xcal=\lc
	X\rc$ and $\Xcal'=\lc X'\rc$, the diagram $[\Zcal']$ satisfies
	\begin{enumerate}\def\theenumi{\roman{enumi}}
		\item 
		$\aikd([\Xcal'\rel U'],[\Xcal\rel U])\leq\epsilon$
		\item
		$\aikd(\Xcal',\Xcal)\leq \epsilon$
		\item
		$[Z'\rel X']\leq \epsilon$
	\end{enumerate}
\end{theorem}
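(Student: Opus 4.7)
The plan is to pass to a homogeneous representative of $[\Zcal]$ at a finite level $n$ via the Asymptotic Equipartition Property (Theorem~\ref{p:aep-tropical}), construct the modified diagram $\Zcal'(n)$ by a Shannon-style random coding argument, and translate the finite-level estimates into the desired asymptotic entropy distance bounds by normalizing by $n$ and letting $n\to\infty$, with the Slicing Lemma (Proposition~\ref{p:slicing}) used at the key step. Concretely, it suffices to produce, for every $\delta>0$ and every sufficiently large $n$, a diagram $\Zcal'(n)$ of the same combinatorial type as the homogeneous approximation $\Zcal(n)$, together with typical atoms $u_{n}\in U(n)$ and $u'_{n}\in U'(n)$, satisfying $\ent(Z'(n)\rel X'(n))\leq\delta n$, $\ikd(\Xcal'(n),\Xcal(n))\leq\delta n$, and $\ikd(\Xcal'(n)\rel u'_{n},\Xcal(n)\rel u_{n})\leq\delta n$; the tropical estimates (i)--(iii) then follow on passing $n\to\infty$.

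For the construction, I would leave the lower sub-diagram $\Xcal(n)=\lc X(n)\rc$ completely untouched, setting $\Xcal'(n):=\Xcal(n)$ and $Z'(n):=X'(n)=X(n)$ with the identity reduction $Z'(n)\to X'(n)$, so that conditions (ii) and (iii) hold on the nose at every level. The upper co-ideal $\lf U\rf$ is the non-trivial part: I would pick a random codebook $u_{1},\dots,u_{M}\in U(n)$ of size $M\approx 2^{n[X:U]}$ drawn i.i.d.\ from the marginal of $U(n)$, and define the reduction $X'(n)\to U'(n):=\{1,\dots,M\}$ by the joint-typicality decoder $x\mapsto m$, where $m$ is the unique index making $(x,u_{m})$ jointly typical. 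By Shannon's channel coding theorem, with high probability over the codebook this decoder is well-defined outside an exceptional set of vanishing probability. The intermediate spaces $W\in\lf U\rf$ are then built as coarsenings of $X'(n)$ respecting the combinatorial structure of $\lf U\rf$ in $\Gbf$.

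The main obstacle is verifying the conditional estimate $\ikd(\Xcal'(n)\rel u'_{n},\Xcal(n)\rel u_{n})\leq\delta n$. Conditioning the modified diagram on an atom $m\in U'(n)$ picks out, on the $X'$-side, essentially the conditionally typical set around the codeword $u_{m}$, while conditioning the original diagram on a typical atom $u\in U(n)$ picks out the conditionally typical set around $u$; by homogeneity of $\Zcal(n)$ these two sub-diagrams are isomorphic up to negligible exceptional mass for typical $u$ and $m$. To promote this approximate isomorphism to a quantitative bound, I would invoke the Slicing Lemma with a common auxiliary slice recording the residual decoding ambiguity within each bin; the entropy of this slice is $o(n)$ by the channel-coding estimates, so the correction $2\size{\Gbf}\cdot\ent(\text{slice})$ becomes negligible after normalization. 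A derandomization step then selects a deterministic codebook for which all the probabilistic bounds hold simultaneously. This strategy closely parallels Makarychev's new proof of the Ahlswede--K\"orner rate region theorem mentioned in the introduction.
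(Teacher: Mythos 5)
Your high-level strategy---replace $[\Zcal]$ by a homogeneous approximation via the AEP, perform a random-coding construction at a finite level $n$, and recover (i)--(iii) by normalizing and letting $n\to\infty$---is exactly the paper's. The gap is in the core construction. You want $U'(n)$ to be an exact function of $X(n)$ (so that $Z'=X'$ and (ii), (iii) hold on the nose), with the function given by a unique joint-typicality decoder for a codebook of size $M\approx \ebf^{n[X:U]}$ drawn i.i.d.\ from $p_{U(n)}$. But the codewords are drawn \emph{independently} of $x$: the probability that a fixed typical $x$ is jointly typical with a fixed codeword is about $\rho=\ebf^{-n[X:U]}$, so with $M\rho\approx 1$ the number of codewords covering a typical $x$ is of constant order, and a constant fraction of the mass of $X(n)$ is covered by no codeword while another constant fraction is covered by several. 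This is a covering problem in the sense of Ahlswede--K\"orner, not a channel-coding problem---there is no correlated ``transmitted'' codeword for your decoder to find---and the decoder is undefined or ambiguous on a set of \emph{non-vanishing} measure, not on ``an exceptional set of vanishing probability.'' Any repair forces a choice: take $M\rho\to 0$ and lose entropy in $U'$, or take $M\rho\to\infty$ so that coverage holds but unique decoding fails. The constant-fraction failure is fatal for (i), because the local Fano-type estimate (Proposition~\ref{p:ikd-local}) converts a constant total-variation discrepancy between $\Xcal'(n)\rel m$ and $\Xcal(n)\rel u$ into an $\ikd$ error of order $n$, which survives the normalization by $n$.

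The paper resolves this by giving up exactness in (ii) and (iii) rather than in (i). It takes $N=\ln^{3}|X_{0}|\cdot\ebf^{[X_{0}:U]}$ sample points, so that every $x$ is covered about $\ln^{3}|X_{0}|$ times; it keeps the covering index as part of the new initial space $Z'_{0}=\set{(x,n)\st x\in X_{0}\rel u_{n}}$, whence $[Z'_{0}\rel X'_{0}]\approx 3\ln\ln|X_{0}|=o(n)$ rather than $0$; and it reweights the distribution on $\un X_{0}$ proportionally to the covering multiplicity $\nu(x,\bar u)$, so that $\Xcal'\rel m$ is \emph{exactly} isomorphic to $\Xcal\rel u$ for every $m$ and $u$. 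That exactness is also what makes the passage to the tropical limit clean: the conditioned tropical diagram is defined through homogeneous approximations in which the conditional does not depend on the atom, whereas in your scheme $\Xcal'(n)\rel m$ genuinely varies with the decoding bin $m$. What remains is then concentration of $\nu$ (Chernoff) fed into the local estimate, which is where the bounds $20\size{\Gbf}$ and $4\ln\ln|X_{0}|$ come from. Finally, your one-line treatment of the intermediate spaces of $\lf U\rf$ as ``coarsenings of $X'(n)$'' glosses over the need to rebuild the whole co-ideal compatibly with $\Gbf$; the paper does this explicitly with the auxiliary $\Hbf$-diagrams $\Acal$ and $\Bcal$ and the diamond~(\ref{eq:minimized-T-diagram}).
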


It is not clear that constructing diagrams $\Zcal'$ as in the theorem
above for a sequence of values of parameter $\epsilon$ decreasing to
$0$, we can obtain a convergent sequence in $\prob[\Gbf]$ with the
limiting diagram satisfying conclusions of the theorem with
$\epsilon=0$.

The proof of Theorem~\ref{p:contraction} is based on the following
proposition, which will be proven in Section~\ref{s:proof}.

\begin{proposition}{p:contraction-approximation}
  Let $(X_{0}\ot{} Z_{0}\to{} U)$ be an admissible fan in some
  \emph{homogeneous} $\Gbf$-diagram of probability spaces $\Zcal$.
  Then there exists $\Gbf$-diagram $\Zcal'$ containing the admissible fan $(X'_{0}\ot{} Z'_{0}\to{}U')$ such that, with the
  notations $\Xcal:=\lc X_{0}\rc$ and $\Xcal':=\lc X'_{0}\rc$, it
  holds that
  \begin{enumerate}
  \item
    $\Xcal\rel u=\Xcal' \rel u'$
    for any $u\in U$ and $u'\in U'$.
  \item
    $\aikd(\Xcal, \Xcal') \leq \ikd(\Xcal,\Xcal')\leq 20\cdot\size{\Gbf}$
  \item
    $[Z'_{0}\rel X'_{0}] \leq 4\ln\ln|X_{0}|$
  \end{enumerate}
\end{proposition}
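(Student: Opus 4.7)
The plan is to construct $\Zcal'$ explicitly via a Shannon-style random-coding argument applied to the ``channel'' $U \to X_0$ determined by the admissible fan in the homogeneous diagram $\Zcal$.

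First I would exploit the homogeneity of $\Zcal$ to give an explicit combinatorial description of the fan. Minimality embeds $Z_0 \hookrightarrow X_0 \times U$; homogeneity then forces every fiber $S_u := \{x : (x,u) \in Z_0\}$ to have a common size $s$ and every $x \in X_0$ to lie in a common number $k = s|U|/|X_0|$ of the $S_u$. Crucially, admissibility places each space of $\Xcal = \lc X_0\rc$ as a descendant of $X_0$ via a deterministic map, so every conditioned diagram $\Xcal\rel u$ coincides with a single diagram $\Ycal$, obtained by pushing forward the uniform distribution on $S_u$ along the morphisms $X_0 \to Y$ for $Y \in \lc X_0\rc$. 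I would then select $U' \subset U$ at random of size $M$ a logarithmic factor below the capacity $|X_0|/s$ (concretely, $M = \lfloor |X_0|/(s\,\log|X_0|)\rfloor$); a standard second-moment computation shows that for some choice of $U'$ the fraction $p$ of $x \in X_0$ lying in at least two of the $S_{u'}$ is $O(1/\log|X_0|)$.

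Next, I would define $\Zcal'$ with the same combinatorial type by setting $Z'_0 := \{(x,u') : u' \in U',\ x \in S_{u'}\}$ with the uniform measure and taking $X'_0$, $U'$ to be its marginals; the spaces in $\lc X'_0\rc$ are obtained by pushing forward along the unchanged deterministic quotient maps, and the spaces in $\lf U'\rf$ by the analogous construction on the $U$-side. Conclusion (i) is then automatic: for $u' \in U'$ the fiber of $Z'_0$ over $u'$ is exactly $S_{u'}$, and pushing forward uniform on $S_{u'}$ reproduces $\Ycal = \Xcal\rel u$. Conclusion (iii) follows from $[Z'_0 \rel X'_0] = H(U' \mid X'_0)$ and a Fano-type estimate: on the non-collision part of $X'_0$ the map $X'_0 \to U'$ is deterministic, so $H(U' \mid X'_0) \leq p\log M + H(p)$, which is safely below $4\ln\ln|X_0|$.

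For (ii) I would invoke the Slicing Lemma (Proposition \ref{p:slicing}) with the binary collision indicator $U^* \in \prob$ equal to $1$ precisely on the collision set of $X_0$, built into two-fans $\Xcal \ot \tilde\Xcal \to (U^*)^{\Gbf}$ and $(U^*)^{\Gbf} \ot \tilde\Xcal' \to \Xcal'$ by forming joint variables. Conditioned on $\{U^* = 0\}$, both $\Xcal$ and $\Xcal'$ are literally the push-forward of the uniform distribution on the non-collision subset of $X_0$, so the integrand in the Slicing bound vanishes on this event and is controlled trivially by $\size{\Gbf}\cdot\log|X_0|$ on $\{U^*=1\}$; with $p = O(1/\log|X_0|)$ the integral contributes $O(\size{\Gbf})$, and the additive term $2\size{\Gbf}\,\ent(U^*) \leq 2\size{\Gbf}$ is absorbed in the constant $20$. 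The main technical obstacle is the random-coding step: calibrating the code rate so that the single error probability $p$ is simultaneously small enough to give $O(\size{\Gbf})$ in (ii) and $O(\ln\ln|X_0|)$ in (iii), while also setting up the two-fans through $U^*$ carefully enough that the Slicing Lemma genuinely applies across the whole indexing category $\Gbf$, including the ancestor side $\lf U\rf$.
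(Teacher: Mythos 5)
Your overall strategy---random coding on the ``channel'' $U\to X_{0}$, identifying $\Xcal'\rel u'$ with $\Xcal\rel u$ fiberwise, and converting a total-variation/collision estimate into an entropy-distance estimate via the Slicing Lemma---is the same as the paper's. But there is a decisive error in the calibration of the code rate, and it makes conclusion (ii) fail. You choose $M=\lfloor |X_{0}|/(s\ln|X_{0}|)\rfloor$ codewords, i.e.\ a logarithmic factor \emph{below} capacity, so that the fibers $S_{u'}$ are almost disjoint (a packing). Then the support of the marginal $X'_{0}$ is $\bigcup_{u'}S_{u'}$, which has at most $Ms=|X_{0}|/\ln|X_{0}|$ points; since $X_{0}$ is uniform by homogeneity, this forces $\ent(X_{0})-\ent(X'_{0})\geq \ln\ln|X_{0}|$ (up to lower-order terms), and since $\ikd$ dominates the $\ell^{1}$-difference of entropy vectors, $\ikd(\Xcal,\Xcal')\gtrsim\ln\ln|X_{0}|\to\infty$, not $\leq 20\size{\Gbf}$. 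Relatedly, your claim that conditioned on $\{U^{*}=0\}$ both $\Xcal$ and $\Xcal'$ push forward the uniform distribution on the non-collision set is false: the non-collision set of $X_{0}$ consists almost entirely of \emph{uncovered} points, which carry mass $\approx 1-1/\ln|X_{0}|$ under $p_{X_{0}}$ but mass $0$ under $p_{X'_{0}}$, so the integrand in the Slicing bound does not vanish there.

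The bound $4\ln\ln|X_{0}|$ in (iii), rather than $O(1)$, is the signal that the trade-off must be resolved the other way: the paper samples $N=\ln^{3}|X_{0}|\cdot\ebf^{[X_{0}:U]}$ codewords i.i.d.\ from $U$, i.e.\ a poly-logarithmic factor \emph{above} capacity, so that the fibers form a nearly uniform \emph{cover} of $X_{0}$: each $x$ has multiplicity $N(x,\bar u)$ concentrated around $N\rho=\ln^{3}|X_{0}|$ within a factor $1\pm 10/\ln|X_{0}|$ (Chernoff plus a union bound over $X_{0}$). Then $p_{X'_{0}}$ is within total variation $O(1/\ln|X_{0}|)$ of uniform, and a Fano-type local estimate (linear in $\alpha\ln|S_{0}|$ plus $\ent(\Lambda_{\alpha})$, itself proved via the Slicing Lemma) yields (ii) with the constant $20\size{\Gbf}$; the price is that the fiber of $Z'_{0}\to X'_{0}$ over $x$ has about $N\rho$ elements, giving $[Z'_{0}\rel X'_{0}]\approx\ln(N\rho)=3\ln\ln|X_{0}|$, which is exactly what (iii) tolerates. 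Your packing-rate construction optimizes (iii) to $O(1)$ at the cost of breaking (ii); to repair the proof you must move to the covering regime and replace the collision-indicator argument by a multiplicative concentration estimate on the covering multiplicities.
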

\begin{proof}[of Theorem \ref{p:contraction}]
First we assume that $[\Zcal]$ is a homogeneous tropical
diagram. It means that it can be represented by a quasi-linear
sequence $(\Zcal(n))_{n\in\Nbb_{0}}$ of homogeneous diagrams, with
defect of the sequence bounded by the function $\phi(t):=C\cdot
t^{3/4}$ for some $C>0$. This means that for any $m,n\in\Nbb$
\begin{align*}
  \aikd(\Zcal(m)\otimes\Zcal(n),\Zcal(m+n))&\leq\phi(m+n)\\
  \aikd(\Zcal^{m}(n),\Zcal(m\cdot n))&\leq D_{\phi}\cdot m\cdot\phi(n)
\end{align*}
where $D_{\phi}$ is some constant depending on $\phi$, see \cite{Matveev-Tropical-2019}.

Fix a number $n\in\Nbb$ and apply
Proposition~\ref{p:contraction-approximation} to the
\emph{homogeneous} diagram $\Zcal(n)$, containing the admissible
fan $X_{0}(n)\ot Z_{0}(n)\to U(n)$ and sub-diagram
$\Xcal(n)=\lc X_{0}(n)\rc$. As a result we obtain a diagram $\Zcal''$
containing the fan $X''_{0}\ot Z''_{0}\to U''$ and the sub-diagram
$\Xcal''=\lc X''_{0}\rc$, such that
\begin{align*}
  &\Xcal''\rel u''
  =
  \Xcal(n)\rel  u
  &\text{\hspace{-5em}for any $u''\in U''$ and $u\in U(n)$}
  \\\tageq{contraction-Z(n)}
  &\aikd(\Xcal'',\Xcal(n))
  \leq 
  20\size{\Gbf}
  \\
  &[Z''_{0}\rel X''_{0}]
  \leq 
  4\ln\ln|X_{0}(n)|
\end{align*}

Define the two tropical diagrams 
\begin{align*}
  [\Zcal']&:=\frac{1}{n}\bernoulli{\Zcal''}\\
  [\tilde\Zcal]&:=\frac{1}{n}\bernoulli{\Zcal(n)}
\end{align*}

Since $\Xcal''\rel u''$ does not depend on $u''$ and $\Xcal(n)\rel u$
does not depend on $u$ we have $[\Xcal'\rel U']=(1/n)\cdot \bernoulli{(\Xcal''\rel
  u'')}$ and $[\tilde\Xcal\rel \tilde U]=(1/n)\cdot\bernoulli{(\Xcal(n)\rel u)}$. From~(\ref{eq:contraction-Z(n)}), we obtain
\begin{align*}
  &[\Xcal'\rel U']
  =
  [\tilde\Xcal\rel\tilde U]
  \\\tageq{contraction-bernoulli-Z(n)}
  &\aikd\big([\Xcal'],[\tilde\Xcal]\big)
  \leq 
  \frac{20\size{\Gbf}}{n}
  \\
  &[Z'_{0}\rel X'_{0}]
  \leq 
  \frac{4\ln\ln|X_{0}(n)|}{n}
\end{align*}
The distance between $[\tilde\Zcal]$ and $[\Zcal]$ can be bounded as follows
\begin{align*}\tageq{distance-Z-Ztilde}
  \aikd\big([\tilde\Zcal],[\Zcal]\big)
  &=
  \frac{1}{n}\aikd\big(\bernoulli{\Zcal(n)},n\cdot[\Zcal]\big)
  =
  \frac{1}{n}\lim_{m\to\infty}
  \frac1m \aikd\big(\Zcal^{m}(n),\Zcal(m\cdot n)\big)
  \\
  &\leq
  \frac{1}{n}D_{\phi}\cdot\phi(n) 
\end{align*}
This also implies
\[\tageq{distance-X-Xtilde}
  \aikd\big([\tilde\Xcal],[\Xcal]\big)
  \leq
  \frac{1}{n}D_{\phi}\cdot\phi(n)
\]
Since conditioning is a Lipschitz continuous operation with Lipschitz
constant $2$, we also have
\[\tageq{distance-conditioning}
   \aikd\big([\tilde\Xcal\rel \tilde U],[\Xcal\rel U]\big)
   \leq
   \frac{2}{n}D_{\phi}\cdot\phi(n) 
\]

Combining the estimates
in~(\ref{eq:contraction-bernoulli-Z(n)}),%
~(\ref{eq:distance-Z-Ztilde}),%
~(\ref{eq:distance-X-Xtilde})
and~(\ref{eq:distance-conditioning}) we obtain
\begin{align*}
  &\aikd\big([\Xcal'\rel U'],[\Xcal\rel U]\big)
  \leq
  2D_{\phi}\cdot\frac{\phi(n)}{n}
  \\
  &\aikd\big([\Xcal'],[\Xcal]\big)
  \leq 
  \frac{20\size{\Gbf}}{n} + D_{\phi}\frac{\phi(n)}{n}
  \\
  &[Z'_{0}\rel X'_{0}]
  \leq 
  \frac{4\ln\ln|X_{0}(n)|}{n} + 2D_{\phi}\frac{\phi(n)}{n}
\end{align*}

Note that $|X_{0}(n)|$ grows at most exponentially (it is bounded by
$\ebf^{n([X_{0}]+C)}$ for some $C$) and $\phi$ is a strictly sub-linear
function. Thus by choosing $n$ sufficiently large depending on given
$\epsilon>0$ we obtain $[\Zcal']$ satisfying conclusions of the
theorem for $[\Zcal]$ homogeneous.

To prove the theorem in full generality observe that all the
quantities on the right-hand side of the inequalities are
Lipschitz-continuous. Since $\prob[\Gbf]_{\hsf}$ is dense in
$\prob[\Gbf]$ the theorem extends to any $[\Zcal]$ by first
approximating it with any precision by a homogeneous configuration and
applying the argument above.
\end{proof}

\subsection{The expansion Theorem}
The following theorem is complimentary to
Theorem~\ref{p:contraction}. The expansion applied to a diagram
containing a reduced admissible fan produces a diagram with an
admissible fan, such that contraction of it is the original diagram.
Thus, arrow expansion is a right inverse of the arrow contraction
operation.

In general, contraction erases some information stored in the diagram,
so there are many right inverses. We prove the theorem below by
providing a simple construction of one such right inverse.

\begin{theorem}{p:expansion-simple}
  Let $([X]\ot{}[Z']\to{}[U'])$ be a reduced admissible fan in some
  tropical $\Gbf$-diagram $[\Zcal']$ and $\lambda>0$. Let
  $[\Xcal]:=\lc X\rc$. Then there exists $\Gbf$-diagram $[\Zcal]$
  containing the copy of $[\Xcal]$, such that the corresponding
  admissible fan $([X]\ot{}[Z]\to{}[U])$ has $[Z\rel X]=\lambda$ and
  $[\Xcal\rel U]=[\Xcal\rel U']$.
\end{theorem}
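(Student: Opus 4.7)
The plan is to construct $[\Zcal]$ by expanding the initial space of $[\Zcal']$: replace $Z'$ with an independent tensor product $Z:=Z'\otimes W$, where $[W]$ is a tropical probability space of entropy exactly $\lambda$. All other spaces and morphisms of $[\Zcal']$ are left untouched, except that morphisms outgoing from the initial index are pre-composed with the projection $Z\to Z'$.

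More precisely, using the identification $\prob[\Cbf_{1}]\cong\Rbb_{\geq 0}$ via entropy, I would pick $[W]\in\prob[\Cbf_{1}]$ with $[W]=\lambda$ and represent $[W]$ and $[\Zcal']$ by independent quasi-linear sequences $(W(n))$ and $(\Zcal'(n))$. Define $\Zcal(n)$ to have the same combinatorial type as $\Zcal'(n)$: its initial space is $Z(n):=Z'(n)\otimes W(n)$, all non-initial spaces agree with those of $\Zcal'(n)$, each morphism outgoing from the initial index is the composition of the projection $Z(n)\to Z'(n)$ with the corresponding morphism in $\Zcal'(n)$, and each morphism between non-initial indices is unchanged. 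Because $Z'$ is initial, every morphism touching the initial index is outgoing, so the recipe unambiguously defines a $\Gbf$-diagram, and the sequence $(\Zcal(n))$ is quasi-linear since $(W(n))$ and $(\Zcal'(n))$ are. Let $[\Zcal]$ be its tropical limit.

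The three required properties then follow almost by inspection. The admissible fan $(X\ot Z\to U)$ inherits its combinatorial structure from $(X\ot Z'\to U')$ in $[\Zcal']$, so it is admissible. Since $[Z'\rel X]=0$ (the original fan is reduced) and $[W]=\lambda$ is independent of $X$, we get $[Z\rel X]=[W]=\lambda$. Finally, the sub-diagram $\Xcal$ and the space $U=U'$ are unchanged, and the joint distribution of $(\Xcal,U)$ in $[\Zcal]$ coincides with that in $[\Zcal']$ (the independent factor $W$ sits on the $Z$-side only, and appears neither in $\Xcal$ nor in $U$), so $[\Xcal\rel U]=[\Xcal\rel U']$.

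The main (and essentially only) obstacle is technical: one must check that the tensor product on one vertex, conditioning, and passage to the tropical limit commute in the expected way. This is a routine consequence of the Lipschitz continuity of the tensor product and of conditioning on $\prob[\Gbf]$, as established in \cite{Matveev-Tropical-2019,Matveev-Conditioning-2019}.
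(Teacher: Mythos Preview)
Your construction tensors $[W]$ only onto the initial vertex, replacing $[Z']$ by $[Z] = [Z'] + [W]$ while leaving $[U] = [U']$ and all other vertices unchanged. This does produce a $\Gbf$-diagram, and the entropy identities $[Z\rel X]=\lambda$ and $[\Xcal\rel U]=[\Xcal\rel U']$ hold. However, the resulting fan $([X] \ot [Z] \to [U])$ is \emph{not minimal}: both legs factor through the projection $[Z] = [Z']+[W] \to [Z']$, and this projection is not an isomorphism (it forgets the $[W]$-coordinate, which neither $[X]$ nor $[U]=[U']$ sees). Since an admissible fan is by definition a minimal fan (together with the combinatorial covering condition), your $[\Zcal]$ does not carry an admissible fan at the required indices, and the theorem as stated is not established. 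Concretely, arrow expansion is meant to be a right inverse to arrow contraction, and contraction takes an admissible fan as input; a non-minimal fan cannot be fed back into Theorem~\ref{p:contraction}.

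The paper's construction addresses precisely this point: rather than adding $[W]$ only at the initial vertex, it replaces \emph{every} space $[V]$ in the co-ideal $\lf U'\rf$ by $[V]+[W]$, so in particular $[U] = [U']+[W]$ as well as $[Z] = [Z']+[W]$. Now the $[W]$-coordinate is visible on the $U$-side of the fan, and the fan $([X]\ot [Z]\to [U])$ is minimal (on underlying sets, the map $Z'\times W\to X\times(U'\times W)$ is injective). The verifications of $[Z\rel X]=\lambda$ and $[\Xcal\rel U]=[\Xcal\rel U']$ then go through as in your argument, the latter because $[W]$ is independent of $[\Xcal]$.
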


\begin{proof}
Let $[W]$ be a tropical probability space with entropy equal to
$\lambda$. For any reduction of tropical spaces $[A]\to{}[B]$, there are natural
reductions
\begin{align*}
  \big([A]+[W]\big)&\to \big([B]+[W])\\
  \big([A]+[W]\big)&\to{} [W]\\
\end{align*}

We construct the diagram $[\Zcal]$ by replacing every space $[V]$ in
the ideal $\lf U'\rf$ with $[U]+[W]$. Every morphisms
$[V_{1}]\to{}[V_{2}]$ within $\lf U'\rf$ is replaced by
\[
\big([V_{1}]+[W]\big)\to \big([V_{2}]+[W])
\]
And any morphism from $[V]$ in $\lf U'\rf$ to a space $[Y]$ in $\lc
X\rc$ is replaced by a composition
\[
\big([V]+[W]\big)\to{} [V]\to{}[Y]
\]
Clearly the resulting diagram satisfies the conclusion of the theorem.
\end{proof}

\medskip

The rest of the article is devoted to the development of necessary
tools and the proof of Proposition~\ref{p:contraction-approximation}.

\section{Local Estimate}
In this section we derive a bound, very similar to Fano's inequality,
on the intrinsic entropic distance between two diagrams of probability
spaces with the same underlying diagram of sets. The bound will be in
terms of total variation distance between two distributions
corresponding to the diagrams of probability spaces. It will be used
in the next section, to prove arrow contraction theorem.

\subsection{Distributions}
\subsubsection{Distributions on sets.}
For a finite set $S$ we denote by $\Delta S$ the collection of all
probability distributions on $S$ and by $\|\pi_{1}-\pi_{2}\|_1$ we denote the
total variation distance between $\pi_{1},\pi_{2}\in\Delta S$.

\subsubsection{Distributions on Diagrams of Sets}
\label{s:disttypes-distributions-config}
Let $\Set$ denote the category of finite sets and surjective maps.
For an indexing category $\Gbf$, we denote by $\Set\<\Gbf\>$ the category
of $\Gbf$-diagrams in $\Set$.  That is, objects in $\Set\<\Gbf\>$ are
commutative diagrams of sets indexed by the category $\Gbf$, the
spaces in the such a diagram are finite sets and arrows represent
surjective maps, subject to commutativity relations.

For a diagram of sets $\Scal=\set{S_{i};\sigma_{ij}}$ we define the
\term{space of distributions
  on the diagram} $\Scal$ by
\[
\Delta\Scal
:=
\set{(\pi_{i})\in\prod_i\Delta S_{i}\st (\sigma_{ij})_{*}\pi_{i}=\pi_{j}}
\]
where $f_{*}:\Delta S\to \Delta S'$ is the affine map induced by
a surjective map $f:S\to S'$. If $S_{0}$ is the initial space of
$\Scal$, then there is an isomorphism
\begin{align*} 
  \tageq{distributions-iso}      
  \Delta S_{0}&\oto[\cong]\Delta\Scal\\
  \Delta S_{0}\ni\pi_{0}
    &\mapsto
  \set{(\sigma_{0i})_{*}\pi_{0}}\in\Delta\Scal
  \\
  \Delta S_{0} \ni \pi_{0}
  &\leftmapsto
  \set{\pi_{i}}\in\Delta
\end{align*}

Using the isomorphism~(\ref{eq:distributions-iso}) we define total
variation distance between two distributions $\pi,\pi'\in\Delta\Scal$
as 
\[
\left\| \pi-\pi' \right\|_{1}:=\left\| \pi_{0} -\pi'_{0}\right\|_{1}
\]

Given a $\Gbf$-diagram of sets $\Scal=\set{S_{i};\sigma_{ij}}$ and an
element $\pi\in\Delta\Scal$ we can construct a $\Gbf$-diagram of
probability spaces $(\Scal,\pi):=\set{(S_{i},\pi_{i});\sigma_{ij}}$.

\medskip

Below we give the estimate of the entropy distance between two
$\Gbf$-diagrams of probability spaces $(\Scal,\pi)$ and $(\Scal,\pi')$
in terms of the total variation distance $\left\| \pi-\pi' \right\|$
between distributions.

\subsection{The estimate}\label{p:ikd-local-estimate}
The upper bound on the entropy distance, that we derive below, has two
summands. One is linear in the total variation distance with the
slope proportional to the $\log$-cardinality of $S_{0}$. The second one is
super-linear in the total variation distance, but it does not depend
on $\Scal$. So we have the following interesting observation: of
course, the super-linear summand always dominates the linear one
locally. However as the cardinality of $\Scal$ becomes large it is
the linear summand that starts playing the main role. This will be
the case when we apply the bound in the next section.

For $\alpha\in[0,1]$ consider a binary probability space with the
weight of one of the atoms equal to $\alpha$
\[
\Lambda_{\alpha}
:=
\big(
\set{\square,\blacksquare};\,
p(\square)=1-\alpha,\,
p(\blacksquare)=\alpha
\big)
\]

\begin{proposition}{p:ikd-local}
  For an indexing category $\Gbf$, consider  a $\Gbf$-diagram
  of sets $\Scal=\set{S_{i},\sigma_{ij}}\in\Set\<\Gbf\>$.
  Let $\pi,\pi'\in\Delta \Scal$ be two probability
  distributions on $\Scal$.
  Denote $\Xcal:=(\Scal,\pi)$, $\Ycal:=(\Scal,\pi')$ and
  $\alpha:=\frac12\left\| \pi-\pi' \right\|_1$. Then
  \[
  \ikd(\Xcal,\Ycal)
  \leq
  2\size{\Gbf}\big(\alpha\cdot\ln|S_{0}|+\ent(\Lambda_{\alpha})\big)
  \]
\end{proposition}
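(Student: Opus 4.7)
The plan is to apply the Slicing Lemma (Proposition~\ref{p:slicing}) with the binary slicing variable $U:=\Lambda_\alpha$ that separates the ``common part'' of $\pi$ and $\pi'$ from their disjoint parts. On one slice of $U$ the two conditioned diagrams will agree on the nose, and on the other slice a crude \emph{a priori} bound in terms of $\ln|S_0|$ will suffice.

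The construction proceeds in two steps. First, I form the maximal coupling on $S_0$: set $p:=\min(\pi_0,\pi'_0)$ pointwise, so that $\sum_{s\in S_0} p(s) = 1-\alpha$, and write
\[
  \pi_0 = (1-\alpha)\mu + \alpha\mu_1, \quad \pi'_0 = (1-\alpha)\mu + \alpha\mu_2,
\]
where $\mu:=p/(1-\alpha)$ and $\mu_1,\mu_2$ are probability measures with disjoint supports. Second, I form the product diagram of sets $\Scal\times U^{\Gbf}$ (with initial space $S_0\times U$ and morphisms $\sigma_{ij}\times\id_U$), and endow $S_0\times U$ with the joint distribution assigning weight $(1-\alpha)\mu(s)$ to $(s,\square)$ and $\alpha\mu_1(s)$ to $(s,\blacksquare)$. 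Pushing forward through the diagram yields a $\Gbf$-diagram $\tilde\Xcal$; I define $\tilde\Ycal$ analogously, using $\mu_2$ in place of $\mu_1$. Each of $\tilde\Xcal$ and $\tilde\Ycal$ carries two reductions: to $\Xcal$ (resp.\ $\Ycal$) by forgetting the $U$-coordinate, and to $U^{\Gbf}$ by forgetting the $\Scal$-coordinate. These are precisely the two two-fans demanded by the Slicing Lemma.

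It remains to estimate the integrand. On the slice $u=\square$ both $\Xcal\rel\square$ and $\Ycal\rel\square$ equal the diagram $(\Scal,\mu)$, so their intrinsic distance vanishes. On the slice $u=\blacksquare$ the conditioned diagrams are $(\Scal,\mu_1)$ and $(\Scal,\mu_2)$; taking their independent tensor product as the apex of a fan and using additivity of entropy together with the quotient bound $|S_i|\leq|S_0|$ gives $\ikd\big((\Scal,\mu_1),(\Scal,\mu_2)\big) \leq 2\size{\Gbf}\ln|S_0|$. Substituting these two slice estimates into Proposition~\ref{p:slicing}, with $p_U(\blacksquare)=\alpha$ and $p_U(\square)=1-\alpha$, delivers exactly the claimed inequality.

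The only delicate point is verifying that $\tilde\Xcal$ and $\tilde\Ycal$ really are $\Gbf$-diagrams whose reductions to $\Xcal$, $\Ycal$, and $U^{\Gbf}$ are measure-preserving; this follows at once from linearity of pushforward along the morphisms $\sigma_{ij}\times\id_U$, so there is no essential obstacle beyond this bookkeeping. Everything else is a direct application of the Slicing Lemma to the coupling built from the minimum measure $p$.
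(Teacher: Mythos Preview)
Your proof is correct and follows essentially the same approach as the paper: both build the maximal coupling via the pointwise minimum $\min(\pi_0,\pi'_0)$, attach the binary label $\Lambda_\alpha$ to record which component of the convex decomposition a point came from, and then apply the Slicing Lemma~\ref{p:slicing} together with the crude tensor-product bound $\ikd\leq 2\size{\Gbf}\ln|S_0|$ on the $\blacksquare$-slice. The only cosmetic difference is that the paper explicitly disposes of the degenerate case $\alpha=1$ (where $\mu$ is undefined) by invoking the rough estimate directly, which you may want to mention for completeness.
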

\begin{proof}
  To prove the local estimate we decompose both $\pi$ and $\pi'$ into a
  convex combination of a common part $\hat \pi$ and rests $\pi^{+}$ and
  $\pi'^+$.  The coupling between the common parts gives no contribution
  to the distance, and the worst possible estimate on the other parts
  is still enough to get the bound in the lemma, by using
  Proposition~\ref{p:slicing}. 

  Let $S_{0}$ be the initial set in the diagram $\Scal$.
  We will need the following obvious rough estimate of the entropy
  distance that holds for any $\pi,\pi'\in\Delta\Scal$:
  \[\tageq{roughlocalestimate}
    \ikd(\Xcal,\Ycal)\leq 2\size{\Gbf}\cdot\ln|S_{0}|
  \]
  It can be obtained by taking a tensor product for the coupling
  between $\Xcal$ and $\Ycal$.

  Our goal now is to write $\pi$ and $\pi'$ as the convex combination of
  three other distributions $\hat \pi$, $\pi^{+}$ and $\pi'^{+}$ as in
  \begin{align*}
    \pi
    &=
    (1-\alpha)\cdot\hat \pi + \alpha\cdot \pi^{+}
    \\
    \pi'
    &=
    (1-\alpha)\cdot\hat \pi + \alpha\cdot \pi'^{+}
  \end{align*}
  with the smallest possible $\alpha\in[0,1]$.

  We could do it the following way.  Let $\pi_{0}$ and $\pi_{o}'$ be
  the distributions on $S_{0}$ that correspond to $\pi$ and $\pi'$
  under isomorphisms~(\ref{eq:distributions-iso}). Let
  $\alpha:=\frac12\left\| \pi-\pi' \right\|_{1}=\frac12\left\|
  \pi_{0}-\pi_{0}' \right\|_{1}=$. If $\alpha=1$ then the proposition
  follows from the rough estimate~(\ref{eq:roughlocalestimate}), so
  from now on we assume that $\alpha<1$.  Define three probability
  distributions $\hat \pi_{0}$, $\pi_{0}^{+}$ and $\pi'^{+}_{0}$ on $S_{0}$
  by setting for every $x\in S_{0}$
  \begin{align*}
    \hat \pi_{0}(x) 
    &:= \frac1{1-\alpha}
    \min\set{\pi_{0}(x),\pi'_{0}(x)}
    \\ 
    \pi_{0}^{+} 
    &:=
    \frac{1}{\alpha}\big(\pi_{0}-(1-\alpha)\hat \pi_{0}\big)
    \\ 
    \pi'^{+}_{0} 
    &:= 
    \frac{1}{\alpha}\big(\pi'_{0}-(1-\alpha)\hat \pi_{0}\big)
  \end{align*}
  
  Denote by $\hat \pi,\pi^{+},\pi'^{+}\in\Delta\Scal$ the distributions
  corresponding to $\hat \pi_{0},\pi_{0}^{+},\pi'^{+}_{0}\in\Delta S_{0}$
  under isomorphism~(\ref{eq:distributions-iso}). Thus we have
  \begin{align*}
    \pi&=(1-\alpha)\hat \pi+\alpha\cdot \pi^{+}\\
    \pi'&=(1-\alpha)\hat \pi+\alpha\cdot \pi'^{+}
  \end{align*}

  Now we construct two fans of
  $\Gbf$-diagrams 
  \[\tageq{fansforlocal}
  \begin{cd}[row sep=-2mm]
    \mbox{}
    \&
    \tilde\Xcal
    \arrow{dl}
    \arrow{dr}
    \mbox{}
    \\
    \Xcal
    \&\mbox{}\&
    \Lambda_{\alpha}
  \end{cd}
  \quad\quad\quad
  \begin{cd}[row sep=-2mm]
    \mbox{}
    \&
    \tilde\Ycal
    \arrow{dl}
    \arrow{dr}
    \&
    \mbox{}
    \\
    \Lambda_{\alpha}
    \&\mbox{}\&
    \Ycal
  \end{cd}
  \]
  by setting 
  \begin{align*}
    \tilde X_{i}
    &:=
    \Big(S_{i}\times\un\Lambda_{\alpha};\;
         \tilde \pi_{i}(s,\square)=(1-\alpha)\hat \pi_{i}(s),\,
         \tilde \pi_{i}(s,\blacksquare)=\alpha\cdot \pi^{+}_{i}(s)
    \Big)
    \\
    \tilde Y_{i}
    &:=
    \Big(S_{i}\times\un\Lambda_{\alpha};\;
         \tilde \pi'_{i}(s,\square)=(1-\alpha)\hat \pi_{i}(s),\,
         \tilde \pi'_{i}(s,\blacksquare)=\alpha\cdot \pi'^{+}_{i}(s)
    \Big)
    \\
  \end{align*}
  and
  \begin{align*}
    \tilde\Xcal
    &:=
    \set{\tilde X_{i};\,\sigma_{ij}\times\mathbf{id}}\\
    \tilde\Ycal
    &:=
    \set{\tilde Y_{i};\,\sigma_{ij}\times\mathbf{id}}\\
  \end{align*}
  The reductions in the fans
  in~(\ref{eq:fansforlocal}) are given by coordinate projections.  Note
  that the following isomorphisms hold
  \begin{align*}
    \Xcal\rel\square
    &\cong
    (\Scal,\hat \pi)
    \\
    \Xcal\rel\blacksquare
    &\cong
    (\Scal, \pi^{+})
    \\
    \Ycal\rel\square
    &\cong
    (\Scal, \hat \pi)
    \cong
    \Xcal\rel\square
    \\
    \Ycal\rel\blacksquare
    &\cong
    (\Scal,\pi'^{+})
  \end{align*}

  Now we apply Proposition~\ref{p:slicing} along with the rough
  estimate in~(\ref{eq:roughlocalestimate}) to obtain the desired
  inequality
  \begin{align*}
    \ikd(\Xcal,\Ycal)
    &\leq
    (1-\alpha)\ikd(\Xcal\rel\square,\Ycal\rel\square)
    +
    \alpha\cdot\ikd(\Xcal\rel\blacksquare,\Ycal\rel\blacksquare)
    \\
    &\quad
    +\sum_{i}\big[\ent(\Lambda_{\alpha}\rel X_{i})
      +\ent(\Lambda_{\alpha}\rel Y_{i})\big]
    \\
    &\leq
    2\size{\Gbf}
    \big(\alpha\cdot\ln|S_{0}|+\ent(\Lambda_{\alpha})\big)
  \end{align*}
\end{proof}

\section{Proof of Proposition~\ref{p:contraction-approximation}}\label{s:proof}
In this section we prove
Proposition~\ref{p:contraction-approximation}, which is shown below
verbatim.
\repeatclaim{p:contraction-approximation}
The proof consists of the construction in Section~\ref{s:construction}
and estimates in Propositions~\ref{p:ikd-estimate}
and~\ref{p:height-estimate}.

\subsection{The construction}\label{s:construction}

In this section we fix an indexing category $\Gbf$, a minimal $\Gbf$-diagram of
probability spaces $\Zcal$ with an admissible sub-fan $X_{0}\ot Z_{0}\to U$.
We denote $\Xcal:=\lc X_{0}\rc$ and by $\Hbf$ we denote the combinatorial
type of $\Xcal=\set{X_{i};\chi_{ij}}$. 

Instead of diagram $\Zcal$ we consider an extended diagram, which is a
two-fan of $\Hbf$-diagrams
\[\tageq{admissible-fan}
\begin{cd}[row sep=-2mm]
  \mbox{}
  \&
  \Ycal
  \arrow{dl}{\pi_{1}}
  \arrow{dr}
  \\
  \Xcal
  \&\mbox{}\&
  U^{\Hbf}
\end{cd}
\]
where $\Ycal=\set{Y_{i};\upsilon_{ij}}$ consists of those spaces in
$\Zcal$, that are initial spaces of two-fans with feet in $U$ and in
some space in $\Xcal$. That is for every $i\in\Hbf$ the space $Y_{i}$
is defined to be the initial space in the minimal fan $X_{i}\ot
Y_{i}\to U$ in $\Zcal$. It may happen that for some pair of indices
$i_{1},i_2\in\Hbf$ the initial spaces of the fans with one feet $U$
and the other $X_{i_{1}}$ and $X_{i_{2}}$ coincide in $\Zcal$. In
$\Ycal$, however, they will be treated as separate spaces, so that the
combinatorial type of $\Ycal$ is $\Hbf$.
Starting with the diagram in~(\ref{eq:admissible-fan}) one can recover
$\Zcal$ by collapsing all the isomorphism arrows. The initial space of
$\Ycal$ will be denoted $Y_{0}$.

We would like to construct a new fan $\Xcal'\ot[\pi_{1}'] \Ycal' \to
V^{\Hbf}$, such that
\[\tageq{conditions}
\begin{cases}
  \Xcal\rel u=\Xcal'\rel v & \text{for any $u\in U$ and $v\in V$}\\
  \ikd(\Xcal',\Xcal)\leq 20\size{\Gbf}\\
  [Y'_{0}\rel X'_{0}]\leq 4\ln\ln|X_{0}|
\end{cases}
\]

Once this goal is achieved, we collapse all the isomorphisms to obtain
$\Gbf$-diagram satisfying conditions in the conclusion of
Proposition~\ref{p:contraction-approximation}.

We start with a general description of the idea behind the
construction, followed by the detailed argument.  To introduce the new
space $V$ we take its points to be $N$ atoms in $u_{1},\ldots,u_{N}\in
U$. Ideally we would like to choose the atoms in such a way that
$X_{0}\rel u_{n}$ are disjoint and cover the whole of $X_{0}$. It is not
always possible to achieve this exactly. However, when $|X_{0}|$ is large,
$N$ is taken slightly larger than $\ebf^{[X_{0}:U]}$, and
$u_{1},\ldots,u_{N}$ are chosen at random, then with high probability
the spaces $X_{0}\rel u_{n}$ will overlap only little and will cover most
of $X_{0}$. The details of the construction follow.

We fix $N \in \Nbb$ and construct several new diagrams. For each of the
new diagrams we provide a verbal and formal description.
\begin{itemize}
\item 
  The space $U^{N}$. Points in it are independent samples of length
  $N$ of points in $U$.
\item
  The space $V_N=(\set{1,\dots,N},\unif)$. A point $n\in V_{N}$ should
  be interpreted as a choice of index in a sample $\bar u\in U^{N}$.
\item
  The $\Hbf$-diagram $\Acal$, where
  \begin{align*}
    \Acal
    &=
    \set{A_{i};\,\alpha_{ij}}
    \\
    A_{i}
    &=
    \big(
    \set{(x,n,\bar u)\st x\in X_i\rel u_{n}}, \unif
    \big)
    \\
    \alpha_{ij}
    &=
    (\chi_{ij},\id,\id)
  \end{align*}
  A point $(x,n,\bar u)$ in $A_{i}$ corresponds to the choice of a
  sample $\bar u\in U^{N}$, an independent choice of a member of the
  sample $u_{n}$ and a point $x\in X_{i}\rel
  u_{n}$.  Recall that the original diagram $\Zcal$ was assumed to be
  homogeneous and, in particular, the distribution on $X_{i}\rel
  u_{n}$ is uniform. Due to the assumption on homogeneity of $\Zcal$,
  the space $X_{i}\rel u$ does not depend on $u\in U$. Since $V_{N}$
  is also equipped with the uniform distribution, it follows that the
  distribution on $A_{i}$ will also be uniform.
\item 
  The $\Hbf$-diagram $\Bcal$, where
  \begin{align*}
    \Bcal
    &=
    \set{B_{i};\,\beta_{ij}}
    \\
    B_{i}
    &=
    \big(
      \set{(x,\bar u)\st x\in\bigcup_{n=1}^{N}X_{i}\rel u_{n}},
      p_{B_i}
    \big)
    \\
    \beta_{ij}
    &=
    (\chi_{ij},\id)
  \end{align*}
  A point $(x,\bar u)\in B_{i}$ is the choice of a sample $\bar u\in
  U^{N}$ and a point $x$ in one of the fibers $X_{i}\rel u_{n}$,
  $n=1,\ldots,N$. The distribution $p_{B_{i}}$ on $B_{i}$ is chosen so
  that the natural projection $A_{i}\to B_{i}$ is the reduction of
  probability spaces. Given a sample $\bar u$, if the fibers
  $X_{i}\rel u_{n}$ are not disjoint, then the distribution on
  $B_{i}\rel \bar u$ need not to be uniform. Below we will give an
  explicit description of $p_{\Bcal}$ and study the dependence of
  $p_{\Bcal}(\,\cdot\,\rel \bar u)$ on the sample $\bar u\in U^{N}$.
\end{itemize}

These diagrams can be organized into a minimal diamond diagram of
$\Hbf$-diagrams, where reductions are obvious projections.
\[
\tageq{minimized-T-diagram}
\begin{cd}[row sep=0mm]
  \&
  \Acal
  \arrow{dl}
  \arrow{dr}
  \&
  \\
  \Bcal
  \arrow{dr}
  \&\&
  \parbox{1em}{\mbox{$V_{N}\otimes U^{N}$}}
  \arrow{dl}
  \\
  \&
  U^{N}
\end{cd}
\]

To describe the probability distribution on $\Bcal$ first we define
several relevant quantities:
\begin{align*}
  \rho
  &:=
  \frac{\big|X_{0}\rel u\big|}{|X_{0}|}=\ebf^{-[X_{0}:U]}
  \\
  N(x,\bar u)
  &:=
  \big|\set{n\in V_{N} \st x\in X_{0}\rel u_{n}}\big|
  \\
  \nu(x,\bar u)
  &:=
  \frac{N(x,\bar u)}{N}
  =
  p_{V_{N}}\set{n\in V_{N} \st x\in X_{0}\rel u_{n}}
\end{align*}
Recall that the distribution $p_{\Bcal}$ is completely determined by
the distribution $p_{B_{0}}$ on the initial space of $\Bcal$ via
isomorphism~(\ref{eq:distributions-iso}). From homogeneity of $\Zcal$
it follows that distributions on both $A_{0}$ and $A\rel_{\bar u}$ are
uniform.  Therefore
\[\tageq{db-B-given-u} 
  p_{B_{0}}(x\rel\bar u)
  :=
  \frac{\nu(x,\bar u)}{\rho\cdot|X_{0}|}
\]

The desired fan $(\Xcal' \ot \Ycal' \to V^{\Hbf})$ mentioned in the beginning
of the section is obtained from the top fan in the diagram in
(\ref{eq:minimized-T-diagram}) by conditioning on $\bar{u} \in
U^N$. We will show later that for an appropriate choice of $N$ and for
most choices of $\bar{u}$, the fan we obtain in this way has the
required properties.

First, we would like to make the following observations. Fix an
arbitrary $\bar{u} \in U^N$. Then:
\begin{enumerate}
\item 
  The underlying set of the probability space $B_{0}\rel \bar
  u=X_{0}\rel\bar u$ is
  $\un X_{0}$.
\item
  The diagrams
  \begin{align*}
    \Ycal'_{\bar u}
    &:=
    \Acal\rel\bar u
    \\
    \Xcal'_{\bar u}
    &:=
    \Bcal\rel\bar u
  \end{align*}
  are included into a two-fan of $\Hbf$-diagrams
  \[
  \begin{cd}[row sep=-1mm]
    \mbox{}
    \&
    \Ycal'_{\bar u}
    \arrow{dl}
    \arrow{dr}
    \&
    \\
    \Xcal'_{\bar u}
    \&
    \&
    V_{N}
  \end{cd}
  \]
  which is obtained by conditioning top fan in the diagram
  in~(\ref{eq:minimized-T-diagram}).

  The very important observation is that diagrams $\Xcal'_{\bar u}\rel
  n$ and $\Xcal\rel u$ are isomorphic for any choice of $n\in V_{N}$
  and $u\in U$. The isomorphism is the composition of the following
  sequence of isomorphisms
  \[
  \Xcal'_{\bar u}\rel n\to
  \Bcal\rel (\bar u, {n}) \to
  \Acal\rel (\bar u, {n})\to
  \Xcal\rel u_{n}\to
  \Xcal\rel u
  \]
  where the first isomorphism follows from the definition of
  $\Xcal'_{\bar u}$,
  the second -- from minimality of the fan $\Bcal\ot\Acal\to V_{N}$,
  the third -- from the definition of $\Acal$ and the forth -- from
  the homogeneity of $\Zcal$.
\end{enumerate}

\subsection{The estimates}
We now claim and prove that one could choose a number $N$ and $\bar u$
in $U^{N}$ such that 
\begin{enumerate}
\item 
  $\ikd(\Xcal'_{\bar u},\Xcal)\leq 20\size{\Hbf}$.
\item
  $[Y'_{\bar u,0}\rel X'_{\bar u,0}]\leq 4\ln\ln|X_{0}|$, where $Y'_{\bar u,0}$
  and $X'_{\bar u,0}$ are initial spaces in $\Xcal'_{\bar u}$ and
  $\Ycal'_{\bar u}$, respectively.
\end{enumerate}

\subsubsection{Total Variation and Entropic Distance estimates}
If we fix some $x_{0}\in X_{0}$, then $\nu=\nu(x_{0},\,\cdot\,)$ is a
scaled binomially distributed random variable with parameters $N$ and
$\rho$, which means that $N\cdot\nu\sim\operatorname{Bin}(N,\rho)$.

First we state the following bounds on the tails of a binomial
distribution. 
\begin{lemma}{p:binomial-totalvar}
   Let $\nu$ be a scaled binomial random variable with parameters $N$
  and $\rho$, then 
  \begin{enumerate}\def\theenumi{\roman{enumi}}
  \item 
    for any $t \in [0,1]$ holds
    \[
      \p\set{|\nu-\rho|>\rho\cdot t}
      \leq
      2\cdot\ebf^{-\frac{1}{3}\cdot N\cdot \rho\cdot t^2}
    \]
  \item
    for any $t\in[0,2]$ holds
    \[
      \p\set{\frac{\nu}{\rho}\ln\frac{\nu}{\rho}>t}
      \leq
      \ebf^{-\frac{1}{12}\cdot N\cdot \rho\cdot t^{2}}
    \]
  \end{enumerate}
\end{lemma}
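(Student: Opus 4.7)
The plan is to derive both bounds from the Chernoff-type exponential inequality for a binomial random variable, namely
\[
  \p\bigl(\nu \geq (1+\delta)\rho\bigr) \leq \exp\bigl(-N\rho\, h(\delta)\bigr), \qquad
  \p\bigl(\nu \leq (1-\delta)\rho\bigr) \leq \exp\bigl(-N\rho\, h(-\delta)\bigr),
\]
valid for $\delta \geq 0$ and $\delta \in [0,1]$ respectively, where $h(\delta) := (1+\delta)\ln(1+\delta) - \delta$. Each inequality follows from Markov applied to an exponential tilt of the binomial moment generating function and then optimising the tilt, which is a textbook one-paragraph computation; nothing about the ambient diagram $\Zcal$ enters.

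For part (i), I would plug in the classical auxiliary Taylor estimates $h(\delta) \geq \delta^2/3$ and $h(-\delta) \geq \delta^2/2$ valid on $[0,1]$. Each is proved by noting that both sides and their derivatives vanish at $\delta=0$ and then comparing the second derivatives on the relevant interval. A union bound of the two one-sided tail estimates then produces the prefactor $2$ and the exponent $\tfrac{1}{3} N\rho t^2$ of (i).

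For part (ii), the essential observation is that $x\ln x \leq 0$ on $[0,1]$, so for $t \geq 0$ the event $\bigl\{\tfrac{\nu}{\rho}\ln\tfrac{\nu}{\rho} > t\bigr\}$ is contained in $\{\nu \geq \rho\}$. Writing $\nu/\rho = 1+\delta$ and using strict monotonicity of $x \mapsto x\ln x$ on $[1,\infty)$, the event coincides with $\{\delta > \delta_t\}$, where $\delta_t \geq 0$ is the unique solution of $(1+\delta_t)\ln(1+\delta_t) = t$. Applying the Chernoff bound at $\delta_t$ yields exponent $N\rho\, h(\delta_t) = N\rho(t - \delta_t)$, and the whole claim collapses to the one-variable analytic inequality $t - \delta_t \geq t^2/12$ for $t \in [0,2]$.

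The main obstacle is precisely this last analytic step, i.e., pinning down the constant $1/12$. I would verify it by first showing $\delta_t \geq t/2$ throughout $[0,2]$: implicit differentiation of the defining equation gives $\delta_t'(t) = 1/(1+\ln(1+\delta_t))$, so $\delta_t'(t) \geq 1/2$ as long as $\ln(1+\delta_t) \leq 1$, i.e. $\delta_t \leq e-1$, which comfortably holds on $[0,2]$ since $\delta_2 < e-1$. Combining $\delta_t \geq t/2$ with the companion inequality $h(\delta) \geq \delta^2/3$ (valid on $[0, 3/2]$ by the same Taylor-remainder reasoning, and $\delta_2 < 3/2$) yields $h(\delta_t) \geq \delta_t^2/3 \geq t^2/12$, which closes the proof.
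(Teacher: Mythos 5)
Your proof is correct, and part (i) is essentially the paper's argument: the paper simply quotes the two multiplicative Chernoff bounds $\p\set{X<(1-\delta)N\rho}\leq \ebf^{-\delta^{2}N\rho/2}$ and $\p\set{X>(1+\delta)N\rho}\leq \ebf^{-\delta^{2}N\rho/3}$ and takes a union bound, which is exactly what you get after inserting $h(\delta)\geq\delta^{2}/3$ and $h(-\delta)\geq\delta^{2}/2$. For part (ii) you take a genuinely different route to the same constant. The paper never introduces the transcendental threshold: it uses $\ln x\leq x-1$ to get $\set{\tfrac{\nu}{\rho}\ln\tfrac{\nu}{\rho}>t}\subseteq\set{\tfrac{\nu}{\rho}(\tfrac{\nu}{\rho}-1)>t}$, solves the quadratic to obtain the threshold $\nu>\rho\bigl(1+\tfrac12(\sqrt{1+4t}-1)\bigr)$, notes $\sqrt{1+4t}-1\geq t$ on $[0,2]$, and thus applies the upper-tail bound at $\delta=t/2\in[0,1]$, where the standard $\delta^{2}/3$ estimate holds verbatim. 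You instead keep the exact threshold $\delta_{t}$ with $(1+\delta_{t})\ln(1+\delta_{t})=t$ and the exact rate $h(\delta_{t})=t-\delta_{t}$. This is sharper in principle, but it costs you two extra verifications the paper avoids: the monotonicity argument $\delta_{t}\geq t/2$, and the extension of $h(\delta)\geq\delta^{2}/3$ past the usual range $[0,1]$ up to $\delta=3/2$, which you need because $\delta_{2}\approx 1.35>1$. Both do check out (the latter because $h(\delta)-\delta^{2}/3$ vanishes to second order at $0$, its derivative changes sign only once on $(0,3/2)$, and it is still positive at $\delta=3/2$), so your argument is complete; the paper's cruder preliminary inequality $\ln x\leq x-1$ is precisely what keeps its $\delta$ inside $[0,1]$ and makes the final step a one-line quotation.
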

The proof of Lemma~\ref{p:binomial-totalvar} can be found at the end
of this section.

Below we use the notation $\Pbb:=p_{U^N}$ for the probability distribution
on $U^{N}$. For a pair of complete diagrams $\Ccal$, $\Ccal'$ with the same
underlying diagram of sets and with initial spaces $C_0$, $C_0'$, we
will write $\alpha(\Ccal, \Ccal')$ for the halved total variation
distance between their distributions
\[
\alpha(\Ccal,\Ccal'):=\frac12\left\|p_{C_0}-p_{C_0'}\right\|_{1}
\]

\begin{proposition}{p:totalvar-estimate}
  In the settings above, for $t \in [0,1]$, the following inequality holds
  \[
  \p\set{\bar u\in U^{N}\st 2 \alpha(\Xcal'_{\bar u},\Xcal)>t}
  \leq
  2|X_{0}|\cdot\ebf^{-\frac13 N\cdot \rho\cdot t^{2}}
  \]
\end{proposition}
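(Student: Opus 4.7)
The plan is to identify $2\alpha(\Xcal'_{\bar u},\Xcal)$ explicitly as an $\ell^1$-deviation of the empirical counts $\nu(x,\bar u)$ from their mean $\rho$, and then apply Lemma~\ref{p:binomial-totalvar}(i) together with a union bound over $x\in X_{0}$. The steps are as follows.

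First I would compute the two distributions to be compared. By the isomorphism~(\ref{eq:distributions-iso}), the total variation distance $2\alpha(\Xcal'_{\bar u},\Xcal)$ equals the $\ell^{1}$-distance between the distributions of the initial spaces. Homogeneity of $\Zcal$ gives $p_{X_{0}}(x)=1/|X_{0}|$ for every $x\in X_{0}$, while the formula~(\ref{eq:db-B-given-u}) for the conditioned distribution on $B_{0}\rel\bar u=X'_{\bar u,0}$ reads $p_{X'_{\bar u,0}}(x\rel\bar u)=\nu(x,\bar u)/(\rho|X_{0}|)$ (extended by zero to the points $x$ that do not lie in any fiber $X_{0}\rel u_{n}$). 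Therefore
\[
2\alpha(\Xcal'_{\bar u},\Xcal)
=\sum_{x\in X_{0}}\Big|\tfrac{\nu(x,\bar u)}{\rho|X_{0}|}-\tfrac{1}{|X_{0}|}\Big|
=\frac{1}{\rho|X_{0}|}\sum_{x\in X_{0}}|\nu(x,\bar u)-\rho|.
\]

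The next step is the key observation. If $2\alpha(\Xcal'_{\bar u},\Xcal)>t$, then $\sum_{x\in X_{0}}|\nu(x,\bar u)-\rho|>t\rho|X_{0}|$, a sum of exactly $|X_{0}|$ non-negative terms exceeding $|X_{0}|\cdot(\rho t)$. Hence at least one term must strictly exceed $\rho t$, so there exists $x\in X_{0}$ with $|\nu(x,\bar u)-\rho|>\rho t$. Taking the contrapositive and applying the union bound,
\[
\p\set{2\alpha(\Xcal'_{\bar u},\Xcal)>t}
\leq\sum_{x\in X_{0}}\p\set{|\nu(x,\bar u)-\rho|>\rho t}.
\]

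Finally, for each fixed $x\in X_{0}$, the random variable $N\cdot\nu(x,\,\cdot\,)$ is the number of indices $n\in\set{1,\dots,N}$ for which $x\in X_{0}\rel u_{n}$; since the $u_{n}$ are independent samples from $U$ and $\prob\set{x\in X_{0}\rel u}=\rho$, this is binomially distributed with parameters $N$ and $\rho$. Lemma~\ref{p:binomial-totalvar}(i) then yields $\p\set{|\nu(x,\bar u)-\rho|>\rho t}\leq 2\ebf^{-\frac13 N\rho t^{2}}$ for each $x$, and summing over $x\in X_{0}$ gives the claimed bound $2|X_{0}|\cdot\ebf^{-\frac13 N\rho t^{2}}$. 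There is no real obstacle here once the identification in the first paragraph is made; the only subtlety is checking that the underlying diagram of sets of $\Xcal'_{\bar u}$ can be regarded as a sub-diagram of that of $\Xcal$, but this follows directly from the constructions of $\Bcal$ and of the conditioning $B_{i}\rel\bar u$, and from homogeneity of $\Zcal$.
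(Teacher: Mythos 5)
Your proposal is correct and follows essentially the same route as the paper: identify $2\alpha(\Xcal'_{\bar u},\Xcal)$ via (\ref{eq:db-B-given-u}) as $\frac{1}{\rho|X_{0}|}\sum_{x}|\nu(x,\bar u)-\rho|$, pass by averaging to the event that some single $x$ has $|\nu(x,\bar u)-\rho|>\rho t$, apply the union bound over $X_{0}$, and finish with Lemma~\ref{p:binomial-totalvar}(i). The only cosmetic difference is that the paper invokes homogeneity to note all $|X_{0}|$ summands in the union bound are equal, whereas you bound each summand identically, which yields the same constant.
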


\begin{proof}
Recall that by definition $\Xcal'_{\bar u}=\Bcal\rel\bar u$. We use equation (\ref{eq:db-B-given-u}) to expand the left hand side
of the inequality as follows
\begin{align*}
  \p&\set{\bar u\in U^{N}
         \st
         2 \alpha(\Bcal\rel\bar u, \Xcal)>t}
  =
  \p\set{\bar u\in U^{N}
         \st
         \sum_{x\in X_{0}}
           \left|
             \frac{\nu(x,\bar u)}{\rho\cdot|X_{0}|}
             -
             \frac{1}{|X_{0}|}
           \right|  
         > t
        }
  \\
  &=
  \p\set{\bar u\in U^{N}
         \st
         \sum_{x\in X_{0}}
           \left|
             \nu(x,\bar u)
             -
             \rho
           \right|  
         > \rho\cdot|X_{0}|\cdot t
        }
  \\
  &\leq
  \p\set{\bar u\in U^{N}
         \st
         \text{there exists $x_{0}$ such that }
           \left|
             \nu(x_{0},\bar u)
             -
             \rho
           \right|
           >\rho\cdot t
        }
  \\
  &\leq
  \sum_{x\in X_{0}}
  \p\set{\bar u\in U^{N}
         \st
           \left|
             \nu(x,\bar u)
             -
             \rho
           \right|
           >\rho\cdot t
        }
  \\
\end{align*}
Since by homogeneity of the original diagram all the summands
are the same, we can fix some $x_{0}\in X_{0}$ and estimate further:
\begin{align*}
  \p&\set{\bar u\in U^{N}
         \st
         2 \alpha(\Bcal\rel\bar u,\Xcal)>t}
  \leq
  |X_{0}|\cdot
  \p\set{\bar u\in U^{N}
         \st
                      \left|
             \nu(x_{0},\bar u)
             -
             \rho
           \right|
           > \rho\cdot t
        }
  \\
\end{align*}
Applying Lemma~\ref{p:binomial-totalvar}(i) we obtain the required
inequality.  
\end{proof}

In the propositions below we assume that $|X_0|$ is sufficiently large
(larger than $\ebf^{20}$).

\begin{proposition}{p:ikd-estimate}
  In the settings above and for any $\frac{10}{\ln|X_{0}|}\leq t \leq 1$ holds:
  \[
  \p\set{\bar u\in U^{N}:
         \ikd(\Xcal'_{\bar u},\Xcal)
         >
         t(2\cdot\size{\Gbf}\cdot\ln|X_{0}|)
        }
  \leq
  2|X_{0}|\cdot
  \ebf^{{- \frac13 N\cdot \rho\cdot t^{2}}}
  \]
\end{proposition}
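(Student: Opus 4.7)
The plan is to combine the local entropy distance estimate from Proposition \ref{p:ikd-local} with the total variation tail bound from Proposition \ref{p:totalvar-estimate}. The point is that $\Xcal'_{\bar u} = \Bcal\rel\bar u$ and $\Xcal$ share a common underlying diagram of sets (the initial space of $B_0\rel\bar u = X_0\rel\bar u$ has underlying set $\un X_0$, so both diagrams are of type $\Hbf$ over the same sets), which puts us exactly in the situation where Proposition \ref{p:ikd-local} applies.

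Concretely, I would first invoke Proposition \ref{p:ikd-local} with $\alpha = \alpha(\Xcal'_{\bar u},\Xcal)$ to obtain
\[
\ikd(\Xcal'_{\bar u},\Xcal) \leq 2\size{\Hbf}\bigl(\alpha\ln|X_0| + \ent(\Lambda_\alpha)\bigr) \leq 2\size{\Gbf}\bigl(\alpha\ln|X_0| + \ent(\Lambda_\alpha)\bigr),
\]
using $\size{\Hbf}\leq\size{\Gbf}$. Next, I would show that under the event $2\alpha \leq t$ (with $t \in [10/\ln|X_0|, 1]$ and $|X_0| > \ebf^{20}$), the right-hand side is bounded by $t \cdot 2\size{\Gbf}\ln|X_0|$. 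Writing $\alpha \leq t/2 \leq 1/2$, the function $s \mapsto s\ln|X_0| + \ent(\Lambda_s)$ is increasing on $[0,1/2]$, so it suffices to check
\[
\tfrac{t}{2}\ln|X_0| + \ent(\Lambda_{t/2}) \leq t\ln|X_0|,
\]
i.e., $\ent(\Lambda_{t/2}) \leq (t/2)\ln|X_0|$. Using the elementary bound $\ent(\Lambda_s) \leq s(\ln(1/s) + 2)$ for $s \leq 1/2$, it suffices that $\ln(2/t) + 2 \leq \ln|X_0|$, and since $t \geq 10/\ln|X_0|$ and $|X_0| > \ebf^{20}$, this is easily verified.

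Finally, I would combine these two deterministic implications with the probabilistic bound from Proposition \ref{p:totalvar-estimate}, which states $\p\{\bar u : 2\alpha(\Xcal'_{\bar u},\Xcal) > t\} \leq 2|X_0|\ebf^{-\frac{1}{3}N\rho t^2}$. By contrapositive, on the complementary event we have $\ikd(\Xcal'_{\bar u},\Xcal) \leq t \cdot 2\size{\Gbf}\ln|X_0|$, which gives exactly the claimed inequality.

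The only technical step is the elementary estimate on $\ent(\Lambda_{t/2})$, which fixes the constants $10$ and $\ebf^{20}$ appearing in the hypothesis; the rest is a direct chaining of the already-proven estimates. The main conceptual point is simply that the local estimate converts the total variation tail bound into an $\ikd$ tail bound, with the linear-in-$\alpha$ term dominating the sub-linear correction $\ent(\Lambda_\alpha)$ precisely because $|X_0|$ is large.
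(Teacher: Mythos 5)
Your proposal is correct and follows essentially the same route as the paper: apply the local estimate of Proposition~\ref{p:ikd-local} to reduce the $\ikd$ tail event to the event $\set{2\alpha(\Xcal'_{\bar u},\Xcal)>t}$, then invoke Proposition~\ref{p:totalvar-estimate}. Your explicit verification that $\ent(\Lambda_{t/2})\leq (t/2)\ln|X_0|$ in the regime $t\geq 10/\ln|X_0|$, $|X_0|>\ebf^{20}$ just fills in a step the paper states without detail.
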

\begin{proof}
  We will use local estimate to bound entropy distance and then apply
  Proposition~\ref{p:totalvar-estimate}. To simplify notations we will
  write simply $\alpha$ for $\alpha(\Xcal'_{\bar
  u},\Xcal)=\alpha(\Bcal\rel\bar
  u,\Xcal)$.  \begin{align*} \p&\set{\bar u\in
  U^{N}: \ikd(\Bcal\rel\bar u,\Xcal) >
  (2\cdot\size{\Gbf}\cdot\ln|X_{0}|)t } \\ &\leq \p\set{\bar u\in
  U^{N}: 2\cdot\size{\Gbf} (\alpha\cdot\ln|X_{0}|
  + \ent(\Lambda_{\alpha})) > (2\cdot\size{\Gbf}\cdot\ln|X_{0}|)t } \\
  &\leq \p\set{\bar u\in U^{N}: \alpha + \ent(\Lambda_{\alpha})/\ln
  |X_{0}| > t }
\end{align*}
Note that in the chosen regime, $t\geq10/\ln|X_{0}|$, the first
summand on the left-hand side of the inequality is larger than the
second, and thus
\begin{align*}
  \p&\set{\bar u\in U^{N}:
	\ikd(\Bcal\rel\bar u,\Xcal)
	>
	(2\cdot\size{\Gbf}\cdot\ln|X_{0}|)t
}  \\
   &\leq
   \p\set{\bar u\in U^{N}:
          2 \alpha > t
         }
   \\
   &\leq  
  2|X_{0}|\cdot
  \ebf^{- \frac13 N\cdot \rho\cdot t^{2}}
\end{align*}
\end{proof}

\subsubsection{The ``height'' estimate}
Recall that for given $N\in\Nbb$ and $\bar u\in U^{N}$ we have
constructed a two-fan of $\Hbf$-diagrams 
\[
  \Xcal'_{\bar u}\ot\Ycal'_{\bar u}\to V_{N}^{\Hbf}
\]
We will now estimate the length of the arrow $Y'_{\bar u,0}\to
X'_{\bar u,0}$.

\begin{proposition}{p:height-estimate}
In the settings above and for $t\in[0,2]$
\[
  \p\set{\bar u\in U^{N}
         \st
         [Y'_{\bar u,0}\rel X'_{\bar u,0}]
         > \ln(N\cdot \rho)+t
        }
  \leq
  |X_{0}|\cdot\ebf^{-\frac{1}{12}N\cdot \rho\cdot t^{2}}
\]
\end{proposition}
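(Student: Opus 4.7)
The plan is to compute the conditional entropy $[Y'_{\bar u, 0} \rel X'_{\bar u, 0}]$ as an explicit functional of the random function $\nu(\,\cdot\,, \bar u)$, and then to reduce the tail bound in the proposition to the single-coordinate estimate in Lemma~\ref{p:binomial-totalvar}(ii) via a pigeonhole step and a union bound. No individual step is genuinely hard; the only point requiring some care is the pigeonhole argument, because the summands can take negative values.

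First, I would identify the two probability spaces explicitly. By homogeneity of $\Zcal$, every fiber $X_{0}\rel u_{n}$ has cardinality $\rho|X_{0}|$, so the space $Y'_{\bar u,0}=A_{0}\rel\bar u$ is uniform on $\set{(x,n):1\leq n\leq N,\ x\in X_{0}\rel u_{n}}$, whose cardinality is $N\rho|X_{0}|$; hence $\ent(Y'_{\bar u,0})=\ln(N\rho|X_{0}|)$. The space $X'_{\bar u,0}=B_{0}\rel\bar u$ is supported on $X_{0}$ with weights $p(x)=\nu(x,\bar u)/(\rho|X_{0}|)$ by~(\ref{eq:db-B-given-u}). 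Since $X'_{\bar u,0}$ is a reduction of $Y'_{\bar u,0}$, the conditional entropy equals the difference $\ent(Y'_{\bar u,0})-\ent(X'_{\bar u,0})$.

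Second, using the identity $\sum_{x\in X_{0}}\nu(x,\bar u)=\rho|X_{0}|$, which follows from double-counting the pairs $(x,n)$ with $x\in X_{0}\rel u_{n}$, a direct simplification of the difference yields
\[
  [Y'_{\bar u,0}\rel X'_{\bar u,0}]
  =
  \ln(N\rho)
  +
  \frac{1}{|X_{0}|}\sum_{x\in X_{0}}\frac{\nu(x,\bar u)}{\rho}\ln\frac{\nu(x,\bar u)}{\rho}.
\]
Thus the event appearing in the proposition is exactly the event that the average of $f(\nu(x,\bar u)/\rho)$ over $x\in X_{0}$ exceeds $t$, where $f(y)=y\ln y$.

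Third, I would bound this event by pigeonhole together with a union bound. If the average of the summands exceeds $t$, then at least one summand must exceed $t$: this is just the contrapositive of ``all summands are at most $t$ implies their average is at most $t$'', and it remains valid irrespective of the sign of individual $f$-values. A union bound over $x\in X_{0}$, combined with the observation already recorded in the paper just above Lemma~\ref{p:binomial-totalvar} that $N\cdot\nu(x,\,\cdot\,)\sim\operatorname{Bin}(N,\rho)$ with distribution independent of $x$, bounds the probability in the proposition by $|X_{0}|$ times $\p\set{f(\nu(x_{0},\,\cdot\,)/\rho)>t}$ for any fixed $x_{0}\in X_{0}$. Applying Lemma~\ref{p:binomial-totalvar}(ii) then supplies the factor $\ebf^{-\frac{1}{12}N\rho t^{2}}$ for $t\in[0,2]$, completing the proof.
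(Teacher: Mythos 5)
Your proposal is correct and follows essentially the same route as the paper: both reduce the event to $\frac{1}{|X_0|}\sum_{x}\frac{\nu(x,\bar u)}{\rho}\ln\frac{\nu(x,\bar u)}{\rho}>t$ and then conclude by pigeonhole, a union bound over $x\in X_0$ using homogeneity, and Lemma~\ref{p:binomial-totalvar}(ii). The only cosmetic difference is that you obtain the key identity for $[Y'_{\bar u,0}\rel X'_{\bar u,0}]$ as a difference of entropies, whereas the paper averages the fiber entropies $\ln N(x,\bar u)$ over $X'_{\bar u,0}$; your explicit remark that the pigeonhole step survives negative summands is a nice touch.
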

\begin{proof}
  First we observe that the fiber of the reduction $Y'_{\bar u,0}\to
  X'_{\bar u,0}$ over a point $x\in X'_{\bar u,0}$ is a homogeneous
  probability space of cardinality equal to $N(x,\bar u)$, therefore
  its entropy is $\ln N(x,\bar u)$.
  \begin{align*}
    \p&\set{\bar u\in U^{N}
         \st
         [Y'_{\bar u,0}\rel X'_{\bar u,0}]
         >
         \ln(N\cdot \rho)+t
        }
    \\
    \p&\set{\bar u\in U^{N}
         \st
         \int_{X'_{\bar u,0}}[Y'_{\bar u,0}\rel x]\d p_{X'_{\bar u,0}}(x)
         >
         \ln(N\cdot \rho)+t
        }
    \\
    &=
    \p\set{\bar u\in U^{N}
         \st
         \sum_{x\in X_{0}}
         \frac{\nu(x,\bar u)}{\rho\cdot|X_{0}|}
         \ln\big(N\cdot\nu(x,\bar u)\big)
         >
         \ln(N\cdot \rho)+t
    }
    \\
    &\leq
    \p\set{\bar u\in U^{N}
         \st
         \sum_{x\in X_{0}}
         \frac{\nu(x,\bar u)}{\rho\cdot|X_{0}|}
         \ln\big(\frac{\nu(x,\bar u)}{\rho}\big)
         >
         t
    }
    \\
    &\leq
    |X_{0}|\cdot
    \p\set{\bar u\in U^{N}
         \st
         \frac{\nu(x_{0},\bar u)}{\rho}
         \ln\big(\frac{\nu(x_{0},\bar u)}{\rho}\big)
         >
         t
    }
    \\
    &\leq
    |X_{0}|\cdot
    \ebf^{-\frac{1}{12}N\cdot \rho\cdot t^{2}}
  \end{align*}
The last inequality above follows from Lemma~\ref{p:binomial-totalvar}(ii).
\end{proof}

\subsection{Proof of Proposition~\ref{p:contraction-approximation}}
Let $\Xcal'_{\bar u}\ot \Ycal'_{\bar u}\to V_{N}$ be the fan
constructed in Section~\ref{s:construction}. The construction is
parameterized by number $N$ and atom $\bar u\in U^{N}$. Below we will choose
a particular value for $N$ and apply estimates in
Propositions~\ref{p:ikd-estimate} and~\ref{p:height-estimate} with
particular choice of parameter $t$ to show that there is $\bar u\in
U^{N}$, so that the fan satisfies conclusions of
Proposition~\ref{p:contraction-approximation}.

Let 
\begin{align*}
  N
  &:=
  \ln^{3}|X_{0}|\cdot \rho^{-1}
  =\ln^{3}|X_{0}|\cdot \ebf^{[X_{0}:U]}\\
  t&:=\frac{10}{\ln|X_{0}|}
\end{align*}
With this choices of $N$ and $t$ Proposition~\ref{p:ikd-estimate}
implies
\[
  \p\set{\bar u\in U^{N} \st \ikd(\Xcal'_{\bar
    u},\Xcal)>20\size{\Gbf}}\leq \frac{1}{4}
\]
while Proposition~\ref{p:height-estimate} gives
\[
  \p\set{\bar u\in U^{N} \st 
    [Y'_{\bar u,0}\rel X'_{\bar u,0}]>4\ln\ln |X_{0}|}\leq \frac{1}{4}
\]
Therefore there is a choice of $\bar u$ such that the fan 
\[
  \big(
    \Xcal'\ot \Ycal'\to V
  \big)
  :=
  \big(
    \Xcal'_{\bar u,0} \ot \Ycal'_{\bar u,0} \to V_{N}
  \big)
\]
satisfies conditions in~(\ref{eq:conditions}). 
As we have explained in the beginning of Section~\ref{s:construction},
by collapsing isomorphism arrows we obtain $\Gbf$-diagram $\Zcal'$ satisfying
conclusions of Proposition~\ref{p:contraction-approximation}.
\eproof

\subsection{Proof of Lemma~\ref{p:binomial-totalvar}}
The Chernoff bound for the tail of a binomially distributed random
  variable $X\sim\operatorname{Bin}(N,\rho)$ asserts that for any $0 \leq
  \delta \leq 1$ holds
  \begin{align*}
    \p\set{X<(1-\delta) N\cdot \rho}
    &\leq
    \ebf^{-\frac12\delta^2 N\cdot \rho }
    \\
    \p\set{X>(1+\delta) N\cdot \rho}
    &\leq
    \ebf^{-\frac13\delta^2 N\cdot \rho}
  \end{align*}
Applying the bound for upper and lower tail for the binomially
distributed random variable $N\cdot\nu$ we obtain the inequality in (i).

The second assertion follows from the following estimate
\begin{align*}
  \p\set{\frac{\nu}{\rho}\ln\frac{\nu}{\rho}>t}
  &\leq
  \p\set{\frac{\nu}{\rho}\big(\frac{\nu}{\rho}-1\big)>t}
  \\
  &=
  \p\set{\nu > \rho\cdot
  \left(
    \frac{\sqrt{1+4t}-1}{2}+1
  \right)
  }
\end{align*}
For $0\leq t\leq 2$ we have $\sqrt{1+4t}-1\geq t$, therefore
\begin{align*}
  \p\set{\frac{\nu}{\rho}\ln\frac{\nu}{\rho}>t}
  &\leq
  \p\set{\nu > \rho\cdot
  \left(
    \frac{t}{2}+1
  \right)
  }
\end{align*}
By the Chernoff bound we have
\begin{align*}
  \p\set{\frac{\nu}{\rho}\ln\frac{\nu}{\rho}>t}
  &\leq
  \ebf^{-\frac1{12}{N\cdot \rho\cdot t^{2}}}
\end{align*}
\eproof

\bibliographystyle{alpha}       
\bibliography{ReferencesACE}
\end{document}